\documentclass[12pt]{amsart}

\usepackage{amssymb}
\usepackage{verbatim}
\usepackage[toc,page]{appendix}
\usepackage{mathrsfs}

\newtheorem{thm}{Theorem}[section]

\newtheorem{lem}[thm]{Lemma}
\newtheorem{cor}[thm]{Corollary}

\theoremstyle{definition}

\theoremstyle{remark}

\newtheorem*{rem}{Remark}

\frenchspacing

\textwidth=15cm
\textheight=23cm
\parindent=16pt
\oddsidemargin=-0.5cm
\evensidemargin=-0.5cm
\topmargin=-0.5cm

\numberwithin{equation}{section}

\newcommand{\M}{\mathcal{M}}

\newcommand{\Mod}[1]{\ (\textup{mod}\ #1)}

\providecommand{\sym}{\operatorname{sym}}
\providecommand{\arcosh}{\operatorname{arcosh}}

\DeclareMathOperator{\res}{res}

\begin{document}

\title[]{The first moment of Maa{\ss} form symmetric square $L$-functions}

\begin{abstract}
We prove an asymptotic formula for the twisted first moment of Maa{\ss} form symmetric square $L$-functions on the critical line and at the critical point. The error term is estimated uniformly with respect to all parameters.
\end{abstract}

\author{Olga  Balkanova}
\address{Steklov Mathematical Institute of Russian Academy of Sciences, 8 Gubkina st., Moscow, 119991, Russia}
\email{olgabalkanova@gmail.com}
\keywords{L-functions; moments; Maa{\ss} forms}
\subjclass[2010]{Primary:  11F12, 11L05, 11M06}

\maketitle

\tableofcontents

%%%%%%%%%%%%%%%%%%%%%%%%%%%%%%%%%%%%%%%%%%%%%%%%%%%%%%%%%%%%%%%%

%%%%%%%%%%%%%%%%%%%%%%%%%%%%%%%%%%%%%%%%%%%%%%%%%%%%%%%%%%%%%%%%%%%%%%
\section{Introduction}
%%%%%%%%%%%%%%%%%%%%%%%%%%%%%%%%%%%%%%%%%%%%%%%%%%%%%%%%%%%%%%%%%%%%%%

Let $\{u_j\}$ be
the orthonormal basis of the space of Maa{\ss} cusp forms consisting of common eigenfunctions of all Hecke operators and the hyperbolic Laplacian. We denote by
$\{\lambda_{j}(n)\}$ the eigenvalues of Hecke operators acting on $u_{j}$ and by $\kappa_{j}=1/4+t_{j}^2$  the eigenvalues of the hyperbolic Laplacian acting on $u_{j}$.
Elements of the basis have a Fourier expansion of the following form
\begin{equation*}
u_{j}(x+iy)=\sqrt{y}\sum_{n\neq 0}\rho_{j}(n)K_{it_j}(2\pi|n|y)e(nx),
\end{equation*}
where $K_{\alpha}(x)$ is the $K$-Bessel function and
$\rho_{j}(n)=\rho_{j}(1)\lambda_{j}(n).$

The symmetric square $L$-function is defined by
\begin{equation}
L(\sym^2 u_{j},s):=\zeta(2s)\sum_{n=1}^{\infty}\frac{\lambda_{j}(n^2)}{n^s}
\end{equation}
for $\Re{s}>1$ and admits the analytic continuation to the whole complex plane.

For $T^{\epsilon}<G<T^{1-\epsilon}$  we introduce the test function 
\begin{equation}\label{omega def}
\omega_T(r):=\frac{1}{G\pi^{1/2}}\int_T^{2T}\exp\left(-\frac{(r-K)^2}{G^2}\right)dK
\end{equation}
and investigate the asymptotic behaviour of the twisted first moment
\begin{equation}
\M_1(l,s):=\sum_{j}\omega_T(t_j)\alpha_{j}\lambda_{j}(l^2)L(\sym^2 u_{j},s),
\end{equation}
on the critical line $s=1/2+it$ and at the critical point $s=1/2,$
where  
\begin{equation}\label{alphaj}
\alpha_{j}:=\frac{|\rho_{j}(1)|^2}{\cosh{\pi t_j}}
\end{equation}
is the normalizing coefficient.

\begin{thm}\label{thm rho=1/2+it} Assume that $T^{\epsilon}<G<T^{1-\epsilon}.$
For $s=1/2+it,$ $|t|\ll T^{1-\epsilon}/G$ and $l\ll G^2/T^{\epsilon}$ we have
\begin{equation}\label{M1lrho=1/2+it}
M_1(l,s)=MT(T;G;l;t)+O(T(1+|t|)^{\theta_t}l^{1/2+2\theta}+
T(1+|t|)^{\vartheta}l^{\epsilon}\log^3T),
\end{equation}
where $\theta=\theta_t=1/6+\epsilon$, $\vartheta=13/84+\epsilon$ and $MT(T;G;l;t)$ is the main term given by \eqref{eq:main term with t}.
\end{thm}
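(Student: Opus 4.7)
The strategy is the standard one: apply an approximate functional equation to $L(\sym^2 u_j, 1/2+it)$ to reduce the moment to a spectral average of single Hecke eigenvalues, invoke Kuznetsov's trace formula, and show that the diagonal and continuous-spectrum contributions produce $MT(T;G;l;t)$ while the Kloosterman piece is absorbed into the stated error.

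First I would write $L(\sym^2 u_j, 1/2+it)$ via its approximate functional equation as a sum over $n = ab^2$ of $\lambda_j(a^2)$ against a smooth weight of effective length a power of $t_j$, plus its dual. The Hecke relation
\[
\lambda_j(l^2)\lambda_j(a^2) = \sum_{d \mid (l^2,a^2)} \lambda_j\!\left(\frac{l^2 a^2}{d^2}\right)
\]
then lets me collect terms into a weighted sum over a single variable $m$ of the basic spectral average $\mathcal{S}(m) := \sum_j \alpha_j \omega_T(t_j)\lambda_j(m)$. Kuznetsov's formula expresses $\mathcal{S}(m)$ as the sum of a diagonal term, a continuous-spectrum contribution built from divisor sums weighted by $h(r) := \omega_T(r) r \tanh(\pi r)/\pi$, and an off-diagonal piece of Kloosterman sums $S(m,1;c)$ twisted by a Bessel transform $\phi$ of $h$. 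The first two, after resumming over the Hecke parameters $a,b,d$, should combine into $MT(T;G;l;t)$; verifying the stated form is a Mellin--Barnes computation using the definition \eqref{omega def} of $\omega_T$ and standard evaluations involving $\zeta$.

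The decisive step is bounding the Kloosterman contribution uniformly in $l$ and $t$. I would analyze $\phi$ by stationary phase in the $K$-integral defining $\omega_T$, obtaining an effective localization $c \asymp \sqrt{m}/T$ outside of which $\phi$ is negligible. For small $c$ the Weil bound on $S(m,1;c)$ together with subconvex estimates at height $t$ should produce the error term involving $\vartheta = 13/84+\epsilon$; for large $c$ I would invert Kuznetsov and apply the spectral large sieve together with the Kim--Sarnak exponent $\theta = 1/6+\epsilon$, producing the $l^{1/2+2\theta}$ contribution. The principal obstacle is uniformity: the length of the approximate functional equation depends on both $l$ and $t_j$, the Bessel transform $\phi$ has $t$-dependent oscillatory behaviour, and $MT(T;G;l;t)$ itself depends nontrivially on $t$, so each of these dependencies must be tracked simultaneously. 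The hypotheses $l \ll G^2/T^\epsilon$ and $|t| \ll T^{1-\epsilon}/G$ are calibrated precisely so that the resulting errors remain below the size of the main term.
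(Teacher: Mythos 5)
Your outline (reduce to a spectral average of single Hecke eigenvalues, apply Kuznetsov, diagonal gives the main term, off-diagonal goes into the error) differs from the paper's route and, more importantly, is missing the two ideas that the stated error terms actually depend on. The paper uses no approximate functional equation: it works with the Dirichlet series for $\Re{\rho}>3/2$, applies Kuznetsov termwise, and then reorganizes the resulting sums of Kloosterman sums $S(n^2,l^2;q)$ into Zagier's quadratic-form zeta functions $\mathscr{L}_{n^2-4l^2}(\rho)$ weighted by the transform $I(\rho;n/l)$, obtaining the exact formula of Theorem \ref{thm rho<1 exact}, which is then continued analytically to $\Re{\rho}=1/2$. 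This structural conversion is what produces the exponents: $\mathscr{L}_{n^2-4l^2}(1/2+it)$ is essentially a Dirichlet $L$-function of a real character, so $\theta=\theta_t=1/6+\epsilon$ comes from the Conrey--Iwaniec/Young subconvexity bound \eqref{eq:subconvexity}, applied to the finitely many negative-discriminant terms $1\le n\le 2l-1$; that is the sole source of the $T(1+|t|)^{\theta_t}l^{1/2+2\theta}$ error (Corollary \ref{cor I est z<2}). In your sketch this mechanism is absent: you call $\theta=1/6+\epsilon$ ``the Kim--Sarnak exponent'' (which is $7/64$ and bounds Hecke eigenvalues, not central values of real-character $L$-functions), and a spectral large sieve applied to raw Kloosterman sums gives no route to the exponent $1/2+2\theta$ in $l$ nor to the factor $(1+|t|)^{\theta_t}$.

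Two further misplacements would derail the argument as written. First, you assign the continuous-spectrum contribution to the main term; in the paper it is an error term, and it is precisely where $T(1+|t|)^{\vartheta}l^{\epsilon}\log^3T$ comes from, via Bourgain's bound $\zeta(1/2+2it)\ll|t|^{\vartheta}$ and the mean square of $\zeta$ on the critical line (Lemma \ref{lemma zeta}). Second, the main term $MT(T;G;l;t)$ is not purely diagonal: its second half, $(2\pi)^{s-1}\zeta(2s-1)I(s;2)$, is the zero-discriminant frequency $n=2l$ of the off-diagonal, where $\mathscr{L}_{0}(s)=\zeta(2s-1)$. Your proposal gives no mechanism for isolating this degenerate term, and since it is of main-term size, an argument that leaves it inside ``the Kloosterman piece absorbed into the error'' cannot close: either the error bound fails or the asymptotic is missing a main term. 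These are not uniformity issues of the kind you flag at the end; they are structural steps that the proposed decomposition does not see.
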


\begin{thm}\label{thm rho=1/2}
Assume that $T^{\epsilon}<G<T^{1-\epsilon}$. The following asymptotic formula holds
\begin{multline}\label{M1lrho=1/2}
M_1(l,1/2)=\frac{1}{\pi^2l^{1/2}}\int_{-\infty}^{\infty}r\omega_T(r)\tanh(\pi r)\Biggl(
\frac{3}{2}\gamma-\frac{\pi}{4}-\log l-\frac{3}{2}\log(2\pi)+\\+
\frac{\psi(1/4+ir)+\psi(1/4-ir)}{2}\Biggr)dr+O(Tl^{1/2+2\theta}),
\end{multline}
where $\theta=1/6+\epsilon$.
\end{thm}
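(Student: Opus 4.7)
The plan is to deduce Theorem \ref{thm rho=1/2} from Theorem \ref{thm rho=1/2+it} by taking the limit $t\to 0$ inside the main term $MT(T;G;l;t)$, while noting that the error estimate in \eqref{M1lrho=1/2+it} behaves well at $t=0$ because the factors $(1+|t|)^{\theta_t}$ and $(1+|t|)^{\vartheta}$ are bounded by $1$; moreover $T l^{1/2+2\theta}$ absorbs the other error for all admissible $l$, giving the claimed $O(Tl^{1/2+2\theta})$. Thus the whole problem reduces to evaluating $MT(T;G;l;0)$ in closed form.

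The structure I expect in $MT(T;G;l;t)$, from the approximate functional equation for $L(\sym^2 u_j,1/2+it)$, is a pair of contributions proportional to $l^{-1/2-it}\Phi_+(r,t)$ and $l^{-1/2+it}\Phi_-(r,t)$, where $\Phi_\pm$ are built from ratios of $\Gamma$-factors at shifted arguments of the form $1/4\pm it/2\pm ir$ together with factors involving $\zeta(1\pm 2it)$ and powers of $2\pi$. Each individual piece has a simple pole at $t=0$ coming from $\zeta(1\pm 2it)=\pm 1/(2it)+\gamma+O(t)$, and these poles must cancel between the $+$ and $-$ terms. I would therefore first write $MT(T;G;l;t)$ in a symmetric form and Taylor-expand to first order in $t$ around $t=0$.

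Expanding $l^{-1/2\pm it}=l^{-1/2}(1\pm it\log l+O(t^2))$ and combining with the Laurent expansion of $\zeta(1\pm 2it)$ produces the $-\log l/\pi^2 l^{1/2}$ and the $(3/2)\gamma/\pi^2 l^{1/2}$ contributions in \eqref{M1lrho=1/2}. The digamma terms $\psi(1/4+ir)+\psi(1/4-ir)$ arise as the logarithmic derivatives at $t=0$ of the archimedean $\Gamma$-factors $\Gamma(1/4+it/2\pm ir)$ from the completed symmetric square $L$-function. The remaining constants $-\pi/4$ and $-3\log(2\pi)/2$ collect the logarithmic derivatives at $t=0$ of the power $(2\pi)^{-3/2-3it/2}$ in the $\Gamma$-factor normalization and of the gamma quotient at the symmetric point, together with the evaluation of the non-divergent part of $\zeta(1\pm 2it)$. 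Finally, the spectral weight $r\,\omega_T(r)\tanh(\pi r)$ comes directly from the Plancherel measure $d\mu(r)=r\tanh(\pi r)\,dr/\pi^2$ already present in $MT(T;G;l;t)$.

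The main obstacle is the bookkeeping of the cancellation at $t=0$: one must carry Taylor expansions to sufficient order in all of $\zeta(1\pm 2it)$, the four $\Gamma$-factors, the power of $2\pi$, and $l^{\pm it}$ simultaneously, confirming that the singular parts cancel exactly and that the finite parts combine into the compact expression in \eqref{M1lrho=1/2}. No new analytic input beyond Theorem \ref{thm rho=1/2+it} and classical special-function identities is required; the work is a careful residue/limit computation around $t=0$.
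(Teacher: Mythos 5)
Your reduction of Theorem \ref{thm rho=1/2} to Theorem \ref{thm rho=1/2+it} at $t=0$ misses the essential new difficulty, because Theorem \ref{thm rho=1/2+it} is only proved under the hypothesis $l\ll G^2/T^{\epsilon}$, whereas Theorem \ref{thm rho=1/2} carries no such restriction on $l$. The restriction in Theorem \ref{thm rho=1/2+it} is not cosmetic: it is exactly what makes Corollary \ref{cor I est z>2} work, since for $n\geq 2l+1$ one has $G^2\log^2\bigl(n/(2l)+\sqrt{n^2/(4l^2)-1}\bigr)\gg G^2/l$, and only when $l\ll G^2T^{-\epsilon}$ is this $\gg T^{\epsilon}$, making the tail sum over $n>2l$ negligible. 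For $l\gg G^2T^{-\epsilon}$ the terms with $n$ slightly larger than $2l$ (where $x=n/l$ is near the transition point $2$ of the kernel $I(\rho;x)$) are no longer negligible, and the paper devotes the entire second subsection of Section \ref{sec: main thms} to them: it invokes the uniform asymptotic expansion of ${}_2F_1\bigl(\tfrac14+ir,\tfrac34+ir,1+2ir;4/x^2\bigr)$ from Lemma \ref{cor:hyperest}, derives the refined bound of Lemma \ref{lemma2 I est z>2} with the factor $\bigl(x^2/(x^2-4)\bigr)^{1/4}\exp(-G^2\arcosh^2(x/2))\min\bigl(T^{3/2},T^{1/2}/\arcosh(x/2)\bigr)$, and sums this over $2l<n<2l+lT^{\epsilon}/G^2$ in Corollary \ref{cor I est z>2 rho=1/2} to get $O\bigl(T^{1/2+\epsilon}l^{1/2+2\theta}/G^{1/2+2\theta}\bigr)$. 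None of this appears in your proposal, and without it your argument proves the statement only in the restricted range of $l$.

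Your treatment of the main term is closer to the mark in spirit: the paper does carry out a limit computation at $\rho=1/2$ (Theorem \ref{thm rho=1/2 exact}) in which two simple poles cancel and the constants $\tfrac32\gamma$, $-\log l$, $-\tfrac32\log(2\pi)$, $-\pi/4$ and the digamma terms emerge from Taylor expansions, much as you describe. However, the cancellation is not between two terms of an approximate functional equation with poles from $\zeta(1\pm 2it)$; it is between the diagonal term $\zeta(2\rho)\pi^{-2}l^{-\rho}H_0$, whose pole comes from $\zeta(2\rho)$ at $\rho=1/2$, and the boundary term $(2\pi)^{\rho-1}\zeta(2\rho-1)I(\rho;2)$, whose pole comes from the factor $\Gamma(\rho-1/2)$ in \eqref{integralIeq2}; the two are matched via the functional equation \eqref{zeta}. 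This part of your plan could be repaired with bookkeeping, but the missing analysis of the tail sum for large $l$ is a genuine gap that cannot be closed by any limit argument from Theorem \ref{thm rho=1/2+it}.
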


Theorem \ref{thm rho=1/2} improves the results of Ng \cite[Theorem 7.1.1]{Ng} and Tang \cite{T}. 

The proof of Theorems \ref{thm rho=1/2+it} and \ref{thm rho=1/2} is based on the method of analytic continuation. More precisely, after gathering the required theoretical material in Section \ref{sec:prelim}, we derive an explicit formula for the twisted first moment in Section \ref{sec:explicit formula}. This allows us to obtain asymptotic formulas \eqref{M1lrho=1/2+it} and \eqref{M1lrho=1/2} by analyzing corresponding special functions in Section \ref{sec: main thms}.

\section{Preliminary results}\label{sec:prelim}
%%%%%%%%%%%%%%%%%%%%%%%%%%%%%%%%%%%%%%%%%%%%%%%%%%%%%%%%%%%%%%%%%%%%%%

Let $\Gamma(z)$ be the Gamma function. By Stirling's formula we have
\begin{multline}\label{Stirling2}
\Gamma(\sigma+it)=\sqrt{2\pi}|t|^{\sigma-1/2}\exp(-\pi|t|/2)\\\times
\exp\left(i\left(t\log|t|-t+\frac{\pi t(\sigma-1/2)}{2|t|}\right)\right)
\left(1+O(|t|^{-1})\right)
\end{multline}
for $|t|\rightarrow\infty$ and a  fixed $\sigma$. 
We remark that instead of $O(|t|^{-1})$ it is possible to write arbitrarily accurate approximations by evaluating sufficiently many terms in the asymptotic expansion. 

Let $\vartheta$ be a subconvexity exponent for the Riemann zeta-function, namely $$|\zeta(1/2+ir)|\ll r^{\vartheta}.$$
 In this direction, the best known result $\vartheta=13/84+\epsilon$ is due to Bourgain \cite{Bo}.

For a complex number $ v $, let
\begin{equation*}
\tau_v(n):=\sum_{n_1n_2=n}\left( \frac{n_1}{n_2}\right)^v=n^{-v}\sigma_{2v}(n),
\end{equation*}
where
\begin{equation*}\sigma_v(n):=\sum_{d|n}d^v.
\end{equation*}

The following identity holds
\begin{equation}\label{series with tau}
\sum_{n=1}^{\infty}\frac{\tau_{ir}(n^2)}{n^s}=\frac{\zeta(s)\zeta(s+2ir)\zeta(s-2ir)}{\zeta(2s)}.
\end{equation}

 For $m,n\geq 1$ the Fourier cofficients satisfy the Hecke identity 
\begin{equation}\label{eq:multipFourcoeff2}
\lambda_{j}(n)\lambda_{j}(m)=\sum_{d|(m,n)}\lambda_{j}\left( \frac{nm}{d^2}\right).
\end{equation}

The following notation will be used for the Mellin transform
\begin{equation*}
\hat{f}(s):=\int_0^{\infty}f(x)x^{s-1}dx.
\end{equation*}

 The  Kloosterman sum is given by
\begin{equation*}
S(n,m;c):=\sum_{\substack{a\pmod{c}\\ (a,c)=1}}e\left( \frac{an+a^*m}{c}\right), \quad aa^*\equiv 1\pmod{c},
\end{equation*}
where $e(x):=exp(2\pi ix)$.
According to the Weil bound
\begin{equation}\label{Weilbound}
|S(m,n;c)|\leq \tau_0(c)\sqrt{(m,n,c)}\sqrt{c}.
\end{equation}

For $\Re{s}>1$ let
\begin{equation}\label{Lbyk}
\mathscr{L}_{n}(s):=\frac{\zeta(2s)}{\zeta(s)}\sum_{q=1}^{\infty}\frac{\rho_q(n)}{q^{s}}=\sum_{q=1}^{\infty}\frac{\lambda_q(n)}{q^{s}},
\end{equation}
where
\begin{equation}
\rho_q(n):=\#\{x\Mod{2q}:x^2\equiv n\Mod{4q}\},
\end{equation}
\begin{equation}
\lambda_q(n):=\sum_{q_{1}^{2}q_2q_3=q}\mu(q_2)\rho_{q_3}(n).
\end{equation}
Zagier \cite{Z} proved that $\mathscr{L}_{n}(s)$ admits meromorphic continuation to the entire complex plane.
Note that
\begin{equation}\label{Lbyk n=0}
\mathscr{L}_{0}(s)=\zeta(2s-1).
\end{equation}
For $n\neq 0$ the following subconvexity bound holds
\begin{equation}\label{eq:subconvexity}
\mathscr{L}_n(1/2+it)\ll n^{\theta}(1+|t|)^{\theta_t},
\end{equation}
where $\theta$ and $\theta_t$ are subconvexity exponents for Dirichlet $L$-functions of real primitive characters. We can take $\theta=\theta_t=1/6+\epsilon$ according to the results of Conrey $\&$ Iwaniec \cite{CI} and Young \cite{Y}.

Throughout the paper we will use different test functions $h(t)$ satisfying some of the following conditions:
\begin{description}
  \item[C1] $h(t)$ is an even function;
  \item[C2] $h(t)$ holomorphic in the strip $|\Im(t)|<\Delta$ for some $\Delta>1/2$;
  \item[C3] $h(t)$ is such that the following estimate $h(t)\ll(1+|t|)^{-2-\epsilon}$ holds in the strip $|\Im(t)|<\Delta$, $\Delta>1/2$;
  \item[C4] $h(\pm(n+1/2)i)=0$ for $n=0,1,\ldots N-1$, where $N>0$ is a sufficiently large integer.
\end{description}
Let
\begin{equation}\label{H0def}
H_0:=\int_{-\infty}^{\infty}rh(r)\tanh(\pi r)dr
\end{equation}
and
\begin{equation}\label{phidef}
\phi(x):=\frac{2i}{\pi}\int_{-\infty}^{\infty}J_{2ir}(x)h(r)\frac{rdr}{\cosh(\pi r)}.
\end{equation}

%%%%%%%%%%%%%%%%%%%%%%%%%%%%%%%%%%%%%%%%%%%%%%%%%%%%%%%%%%%%%%%%%%%%%%
%%%%%%%%%%%%%%%%%%%%%%%%%%%%%%%%%%%%%%%%%%%%%%%%%%%%%%%%%%%%%%%%%%%%%%
\begin{lem}(Kuznetsov trace formula)
For all $m,n \geq 1$ and any function $h(t)$  satisfying the conditions $(C1)-(C3)$ the following formula holds
\begin{multline}\label{eq:KuzTrForm}
\sum_{j=1}^{\infty}\alpha_j\lambda_j(m)\lambda_j(n)h(t_j)+
\frac{1}{\pi}\int_{-\infty}^{\infty}
\frac{\tau_{ir}(m)\tau_{ir}(n)}{|\zeta(1+2ir)|^2}h(r)dr=\\
\frac{\delta(m,n)}{\pi^2}H_0+
\sum_{q=1}^{\infty}\frac{S(m,n;q)}{q}\phi\left( \frac{4\pi \sqrt{mn}}{q}\right).
\end{multline}
\end{lem}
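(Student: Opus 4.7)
The plan is to derive the Kuznetsov trace formula from the Petersson inner product of two Poincar\'e series, computed first spectrally and then geometrically. First I would introduce, for $\Re s > 1$ and $m \geq 1$, the Poincar\'e series
\[
P_m(z, s) := \sum_{\gamma \in \Gamma_\infty \backslash \SL_2(\Z)} \bigl(\Im(\gamma z)\bigr)^s e(m\gamma z),
\]
and compute $\langle P_m, u_j \rangle$ and $\langle P_m, E(\cdot, 1/2 + ir) \rangle$ by unfolding; the resulting expressions involve $\rho_j(m)$ and $\tau_{ir}(m)$ together with $K$-Bessel Mellin integrals in $s$. Combining these projections with the spectral decomposition of $L^2(\Gamma\backslash\HH)$ lets one write $\langle P_m(\cdot, s_1), P_n(\cdot, s_2) \rangle$ as the sum of a diagonal $\delta(m,n)$ contribution, the Maa\ss{} cusp form sum $\sum_j \alpha_j \lambda_j(m)\lambda_j(n)\, h_{s_1,s_2}(t_j)$, and the Eisenstein integral with integrand $\tau_{ir}(m)\tau_{ir}(n)/|\zeta(1+2ir)|^2$, where the test function $h_{s_1, s_2}(r)$ is an explicit ratio of Gamma factors in $s_1, s_2, r$.

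On the other hand, the same inner product can be evaluated arithmetically: unfolding one of the Poincar\'e series and applying the Bruhat decomposition of $\SL_2(\Z)$ isolates the identity contribution (contributing $\delta(m,n)$ times a Gamma-factor coefficient) and, from the coset representatives with nonzero lower-left entry, produces $\sum_{q \geq 1} S(m,n;q)/q$ times an integral that, by the classical formula $\int_0^\infty x^{s-1} e(ax+b/x)\,dx$ for the $J$-Bessel function, equals the image of $h_{s_1,s_2}$ under the transform \eqref{phidef}. Equating the two evaluations yields the desired identity \eqref{eq:KuzTrForm} with $h$ replaced by $h_{s_1, s_2}$ and $\phi$ by the corresponding $J$-Bessel integral transform of $h_{s_1, s_2}$.

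Finally, to pass from this rigid two-parameter family $\{h_{s_1, s_2}\}$ to an arbitrary test function $h$ satisfying C1--C3, I would invoke Sears--Titchmarsh / Lebedev inversion for the Bessel transform, which identifies $h \mapsto \phi$ as an invertible map on a suitable function space and permits one to approximate $h$ by finite linear combinations of the $h_{s_1,s_2}$. The evenness required by C1 reflects the symmetry $t_j \leftrightarrow -t_j$; the strip C2 of width exceeding $1/2$ and the decay C3 guarantee that contours can be shifted past the poles of $\tanh(\pi r)$ so that both sides depend continuously on $h$ in an appropriate topology. The main obstacle is exactly this extension step: one must verify absolute convergence of the geometric side under the inversion, which requires uniform control of $\phi(x)$ for small and large $x$ via the standard Bessel asymptotics and then a summation against $S(m,n;q)$ using the Weil bound \eqref{Weilbound}, together with a careful justification of the interchange of limit, spectral sum, and $q$-sum.
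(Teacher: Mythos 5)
The paper does not prove this lemma itself; it simply cites Iwaniec's book and Kuznetsov's original paper, and the argument you sketch (inner products of Poincar\'e series evaluated spectrally and geometrically, followed by Sears--Titchmarsh/Lebedev inversion to reach general test functions satisfying C1--C3) is precisely the standard proof contained in those references. Your outline is correct and matches that route, so there is nothing to add beyond noting that the heavy lifting is in the inversion/extension step, exactly as you identify.
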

\begin{proof}
See \cite{Iwbook} or \cite{Kuz}.
\end{proof}
%%%%%%%%%%%%%%%%%%%%%%%%%%%%%%%%%%%%%%%%%%%%%%%%%%%%%%%%%%%%%%%%%%%%%%
%%%%%%%%%%%%%%%%%%%%%%%%%%%%%%%%%%%%%%%%%%%%%%%%%%%%%%%%%%%%%%%%%%%%%%

\begin{lem}\label{phi properties}
For any function $h(t)$  satisfying the conditions $(C1)-(C4)$ for $0<\Re{s}<3/2$ we have
\begin{equation}\label{phi Mellin}
\hat{\phi}(s)=\frac{2^{s}i}{\pi}\int_{-\infty}^{\infty}\frac{rh(r)}{\cosh(\pi r)}\frac{\Gamma(s/2+ir)}{\Gamma(1-s/2+ir)}dr.
\end{equation}
For $-1-2N<\Re{s}<3/2$ the following estimate holds
\begin{equation}\label{phi Mellin est}
\hat{\phi}(s)\ll (1+|\Im{s}|)^{\Re{s}-1}.
\end{equation}
\end{lem}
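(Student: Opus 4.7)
The plan is to derive the Mellin identity \eqref{phi Mellin} first in the strip $0 < \Re s < 3/2$ by Fubini combined with the classical Mellin transform of the Bessel function, then to extend it to $-1-2N < \Re s < 3/2$ by analytic continuation via a contour shift enabled by (C2)--(C4), and finally to read off the bound \eqref{phi Mellin est} from Stirling's formula.

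For the identity, I would substitute \eqref{phidef} into $\hat\phi(s)=\int_0^\infty \phi(x)\,x^{s-1}\,dx$ and exchange the two integrations. This is legitimate in $0<\Re s<3/2$ thanks to (C3) combined with the exponential decay of $1/\cosh(\pi r)$. The inner $x$-integral is the standard Weber--Schafheitlin evaluation
\[
\int_0^\infty J_{2ir}(x)\,x^{s-1}\,dx=2^{s-1}\frac{\Gamma(s/2+ir)}{\Gamma(1-s/2+ir)},
\]
valid for real $r$ in the same strip, and plugging it back produces \eqref{phi Mellin}. To enlarge the strip of validity, I would observe that the right-hand side of \eqref{phi Mellin} is entire in $s$: along the real $r$-axis the gamma quotient grows at most polynomially in $r$ while $h(r)/\cosh(\pi r)$ decays exponentially. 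Dually, shifting the $r$-contour in \eqref{phidef} down to $\Im r=-(N+1/2-\epsilon)$ --- permissible by (C2) --- crosses precisely the simple poles $r=-i(n+1/2)$, $n=0,\ldots,N-1$, of $1/\cosh(\pi r)$, whose residues all vanish by (C4). The shifted integral then yields $\phi(x)\ll x^{2N+1-2\epsilon}$ as $x\to 0^+$, so the defining Mellin integral for $\hat\phi$ converges in $-1-2N<\Re s<3/2$, and the two sides of \eqref{phi Mellin} agree there by analytic continuation.

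For the estimate I would write $s=\sigma+i\tau$ and apply Stirling \eqref{Stirling2} to each gamma factor. When $|r|\ll |\tau|$ the imaginary parts of $s/2+ir$ and $1-s/2+ir$ are both of order $|\tau|$, so
\[
\left|\frac{\Gamma(s/2+ir)}{\Gamma(1-s/2+ir)}\right|\ll (1+|\tau|)^{\sigma-1},
\]
while the complementary range $|r|\gg|\tau|$ is annihilated by $1/\cosh(\pi r)$. Integrating the resulting bound against the $L^1$-weight $rh(r)/\cosh(\pi r)$, whose integrability is furnished by (C3), yields \eqref{phi Mellin est}.

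I expect the main obstacle to be the justification of the contour shift used in step two: one must verify that the horizontal pieces vanish at infinity (using (C3) uniformly in the strip), that (C2) indeed allows a shift down to $\Im r=-(N+1/2-\epsilon)$ --- which tacitly requires the holomorphy width $\Delta$ to be taken large enough relative to the $N$ appearing in (C4) --- and that no singularities other than those killed by (C4) are encountered. The Fubini step, the Bessel Mellin transform, and the Stirling analysis are otherwise routine.
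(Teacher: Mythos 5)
Your overall route --- the classical Mellin transform of $J_{2ir}$ for the identity in $0<\Re s<3/2$, a downward shift of the $r$-contour past the poles of $1/\cosh(\pi r)$ killed by (C4), and Stirling for the bound --- is the paper's route. But there is one genuine misstep: the claim that the right-hand side of \eqref{phi Mellin}, with $r$ integrated over the real line, is entire in $s$. It is not. The integrand has poles in $r$ at $r=i(n+s/2)$, $n\ge 0$, coming from $\Gamma(s/2+ir)$, and these sit at height $\Im r=n+\Re s/2$; as $\Re s$ decreases through $0,-2,-4,\dots$ they cross the real $r$-axis. At $\Re s=-2n$ the real-axis integral actually diverges, and for $-2n-2<\Re s<-2n$ it converges but differs from the analytic continuation of $\hat\phi$ by the residues that were crossed. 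Consequently your final paragraph, which applies Stirling ``along the real $r$-axis'', only establishes \eqref{phi Mellin est} for $0<\Re s<3/2$; the range $-1-2N<\Re s\le 0$ is exactly what the paper needs later (e.g.\ for $\hat\phi(1-\rho)$ with $\Re\rho>3/2$ and for the leftward contour shifts applied to $I(\rho;x)$), and your argument does not reach it. The appeal to ``analytic continuation'' of the two sides cannot close this, since the right-hand side is not analytic across $\Re s=0$.

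The repair is precisely the contour shift you already perform, but applied to \eqref{phi Mellin} rather than to \eqref{phidef}: move the $r$-integral down to $\Im r=-c$ with $0<c<N+1/2$ (no poles are crossed, by (C4)), which is the paper's formula \eqref{phi Mellin2} after the substitution $z=ir$. On that contour $\Re(s/2+ir)=\Re s/2+c>0$ throughout $-2c<\Re s<3/2$, so the representation is pole-free and analytic on the whole strip $-1-2N<\Re s<3/2$, and running your Stirling computation there yields \eqref{phi Mellin est} in the full range. Your observation that this shift tacitly requires the holomorphy width $\Delta$ in (C2) to exceed $N+1/2$ is fair and applies equally to the paper; it is harmless because the test function \eqref{hN def} actually used is entire. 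Your bound $\phi(x)\ll x^{2N+1-2\epsilon}$ near $x=0$ is also correct and gives convergence of the Mellin integral, but by itself it cannot produce the decay in $\Im s$ asserted in \eqref{phi Mellin est}.
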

\begin{proof}
Equation \eqref{phi Mellin} follows immediately from \eqref{phidef} and \cite[(1), p. 326]{BE}. Using $(C4)$ we can move the line of integration in \eqref{phi Mellin}  to $\Im{r}=-c$ with $0<c<N+1/2$ without crossing any poles. Making the change of variables $z:=ir$ we obtain
\begin{equation}\label{phi Mellin2}
\hat{\phi}(s)=\frac{2^{s}}{\pi i}\int_{(c)}\frac{zh(iz)}{\cos(\pi z)}\frac{\Gamma(s/2+z)}{\Gamma(1-s/2+z)}dz
\end{equation}
for $-1-2N<\Re{s}<3/2.$ Applying  Stirling's formula \eqref{Stirling2} to estimate \eqref{phi Mellin2} we prove
\eqref{phi Mellin est}.
\end{proof}
%%%%%%%%%%%%%%%%%%%%%%%%%%%%%%%%%%%%%%%%%%%%%%%%%%%%%%%%%%%%%%%%%%%%%%
%%%%%%%%%%%%%%%%%%%%%%%%%%%%%%%%%%%%%%%%%%%%%%%%%%%%%%%%%%%%%%%%%%%%%%

%%%%%%%%%%%%%%%%%%%%%%%%%%%%%%%%%%%%%%%%%%%%%%%%%%%%%%%%%%%%%%%%%%%%%%
\section{Explicit formula}\label{sec:explicit formula}
%%%%%%%%%%%%%%%%%%%%%%%%%%%%%%%%%%%%%%%%%%%%%%%%%%%%%%%%%%%%%%%%%%%%%%
In this section we prove an explicit formula relating the first moment
\begin{equation}
\M_1(l,\rho;h):=\sum_{j}h(t_j)\alpha_{j}\lambda_{j}(l^2)L(\sym^2 u_{j},\rho)
\end{equation}
and sums of $\mathscr{L}_{n^2-4l^2}(\rho)$ weighted by the following integral
\begin{equation}\label{eq:integralI}
I(\rho;x):=\frac{2}{2\pi i}\int_{(\Delta)}\hat{\phi}(w)\Gamma(1-\rho-w)\sin\left( \pi \frac{\rho+w}{2}\right)(x/2)^wdw,
\end{equation}
where $-1-2N<\Delta<1-\Re{\rho}.$
\begin{thm}\label{thm rho<1 exact} For $0<\Re{\rho}<1$ and any function $h(t)$  satisfying the  conditions $(C1)-(C4)$ we have
\begin{multline}\label{eq:M1lrho<1}
M_1(l,\rho;h)=\frac{\zeta(2\rho)}{\pi^2l^{\rho}}\int_{-\infty}^{\infty}rh(r)\tanh(\pi r)dr+
(2\pi)^{\rho-1}\zeta(2\rho-1)I(\rho;2)-\\-
\frac{\zeta(\rho)}{\pi}\int_{-\infty}^{\infty}\frac{\tau_{ir}(l^2)\zeta(\rho+2ir)\zeta(\rho-2ir)}{|\zeta(1+2ir)|^2}h(r)dr-
\frac{2\zeta(2\rho-1)}{\zeta(2-\rho)}\tau_{(1-\rho)/2}(l^2)h\left(\frac{1-\rho}{2i}\right)
+\\
\frac{\hat{\phi}(1-\rho)}{(4\pi l)^{1-\rho}}\mathscr{L}_{-4l^2}(\rho)+
(2\pi)^{\rho-1}\sum_{n=1}^{2l-1}\frac{1}{n^{1-\rho}}\mathscr{L}_{n^2-4l^2}(\rho)I\left(\rho; \frac{n}{l}\right)+
(2\pi)^{\rho-1}\sum_{n=2l+1}^{\infty}\frac{1}{n^{1-\rho}}\mathscr{L}_{n^2-4l^2}(\rho)I\left(\rho; \frac{n}{l}\right).
\end{multline}
\end{thm}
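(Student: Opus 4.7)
The plan is to substitute the Dirichlet series for $L(\sym^2 u_j,\rho)$ into the first moment, apply the Kuznetsov trace formula term by term, and then reorganise the geometric side of Kuznetsov using Zagier's series $\mathscr{L}_n$ evaluated at the discriminants $n^2 - 4l^2$.

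I would begin by writing $L(\sym^2 u_j,\rho) = \zeta(2\rho)\sum_{n\geq 1}\lambda_j(n^2)/n^\rho$, which converges absolutely for $\Re\rho > 2$; once \eqref{eq:M1lrho<1} is established in that range it extends by analytic continuation to $0<\Re\rho<1$. This casts
\[
M_1(l,\rho;h) = \zeta(2\rho)\sum_{n=1}^{\infty}\frac{1}{n^\rho}\sum_{j}h(t_j)\alpha_j\lambda_j(l^2)\lambda_j(n^2),
\]
so that \eqref{eq:KuzTrForm} applies directly to the inner spectral sum with indices $(l^2,n^2)$. The Kronecker delta $\delta(l^2,n^2)$ forces $n=l$ and, together with \eqref{H0def}, produces the leading term $\zeta(2\rho)H_0/(\pi^2 l^\rho)$. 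For the continuous spectrum I would swap the $r$-integral with the $n$-summation and collapse $\sum_n \tau_{ir}(n^2)/n^\rho$ through \eqref{series with tau}; the factor $\zeta(2\rho)$ cancels, giving the third term on the right of \eqref{eq:M1lrho<1}.

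The main work is the Kloosterman contribution. Applying the Mellin inversion $\phi(y)=\frac{1}{2\pi i}\int_{(\Delta)}\hat\phi(w)y^{-w}\,dw$ (legitimate for $\Delta$ inside the strip of Lemma \ref{phi properties}) with $y=4\pi l n/q$ and interchanging the $w$-integral with the $(n,q)$-sums reduces the task to understanding the double Dirichlet series
\[
Z(w) := \sum_{n=1}^{\infty}\frac{1}{n^{\rho+w}}\sum_{q=1}^{\infty}\frac{S(l^2,n^2;q)}{q^{1-w}}.
\]
The decisive analytic input is Zagier's identity, which decomposes $S(l^2,n^2;q)$ through binary quadratic forms and expresses $Z(w)$ as an arithmetic sum involving $\mathscr{L}_{n^2-4l^2}(\rho)$ against an explicit quotient of Riemann zeta values. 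The term $n=2l$ has vanishing discriminant and, by \eqref{Lbyk n=0}, contributes $\zeta(2\rho-1)$, producing the separated summand $(2\pi)^{\rho-1}\zeta(2\rho-1)I(\rho;2)$; the remaining indices split naturally into $n<2l$ (negative discriminant) and $n>2l$ (positive discriminant), and in both cases the resulting $w$-integrals are precisely the transform $I(\rho;n/l)$ defined in \eqref{eq:integralI}.

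Finally, pushing the contour across the pole of $\Gamma(1-\rho-w)$ at $w=1-\rho$ inside the Mellin--Barnes representation picks up the residue $\hat\phi(1-\rho)(4\pi l)^{\rho-1}\mathscr{L}_{-4l^2}(\rho)$, which absorbs the would-be $n=0$ frequency of the Kloosterman zeta series; a further residue, coming from the pole of the zeta quotient that appears in Zagier's expansion, combines with \eqref{phi Mellin} to yield the $\tau_{(1-\rho)/2}(l^2)h((1-\rho)/2i)$ term. The main obstacle is the third step: the precise identification of $Z(w)$ as a sum of $\mathscr{L}_{n^2-4l^2}(\rho)$ against explicit zeta quotients, because every boundary and residue contribution in \eqref{eq:M1lrho<1} descends from this single rewriting and each piece must be tracked with the correct sign, prefactor, and normalisation.
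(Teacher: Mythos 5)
Your overall plan coincides with the paper's: the formula is first established for $\Re\rho$ large (the paper takes $\Re\rho>3/2$) by opening the Dirichlet series for $L(\sym^2 u_j,\rho)$, applying the Kuznetsov formula to the spectral sum with indices $(l^2,n^2)$, collapsing the Eisenstein contribution via \eqref{series with tau}, and converting the Kloosterman term into sums of $\mathscr{L}_{n^2-4l^2}(\rho)$ against the transform $I(\rho;\cdot)$ — the paper delegates exactly this computation to the analogue of \cite[Lemma 5.1]{BF1}, with $\hat{J}_{2k-1}$ replaced by $\hat\phi$. The separation of the $n=2l$ term via \eqref{Lbyk n=0} and the origin of the $\hat\phi(1-\rho)(4\pi l)^{\rho-1}\mathscr{L}_{-4l^2}(\rho)$ term are also as in the paper.

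There is, however, one genuine gap: you misattribute the source of the term $-\frac{2\zeta(2\rho-1)}{\zeta(2-\rho)}\tau_{(1-\rho)/2}(l^2)\,h\bigl(\frac{1-\rho}{2i}\bigr)$, locating it in ``the pole of the zeta quotient that appears in Zagier's expansion'' on the Kloosterman side. In fact this term has nothing to do with the arithmetic side: it is the residue contribution that arises when the \emph{continuous-spectrum} integral $C(l;\rho)=\frac{\zeta(\rho)}{\pi}\int_{-\infty}^{\infty}\frac{\tau_{ir}(l^2)\zeta(\rho+2ir)\zeta(\rho-2ir)}{|\zeta(1+2ir)|^2}h(r)\,dr$ is continued from $\Re\rho>3/2$ to $0<\Re\rho<1$. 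Writing it as a contour integral in $z=2ir$ over $\Re z=0$, the poles of $\zeta(\rho+z)$ and $\zeta(\rho-z)$ at $z=\pm(1-\rho)$ cross the contour as $\Re\rho$ passes through $1$, and by \cite[Corollary 2.4.2]{CMR} the continuation equals the same integral plus the two residues, which combine (using that $h$ and $\tau$ are even) into exactly the displayed term — the telltale sign is the test function evaluated at the $\rho$-dependent spectral point $(1-\rho)/(2i)$, which can only come from a residue in the $r$-variable. Your proposal treats the Eisenstein term as if its large-$\Re\rho$ expression continues unchanged, so as written the argument would miss this term (or, equivalently, would need to manufacture it incorrectly on the Kloosterman side, where no mechanism produces a value of $h$ at an isolated complex point). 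Once the continuation of $C(l;\rho)$ is handled as above, the rest of your argument goes through and reproduces \eqref{eq:M1lrho<1}.
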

The proof of Theorem \ref{thm rho<1 exact}  is based on the method of analytic continuation. 
Therefore, we first assume that $\Re{\rho}$ is sufficiently large.
%%%%%%%%%%%%%%%%%%%%%%%%%%%%%%%%%%%%%%%%%%%%%%%%%%%%%%%%%%%%%%%%%%%%%%%%%
\begin{lem} For $\Re{\rho}>3/2$ and any function $h(t)$  satisfying the conditions $(C1)-(C4)$ the following explicit formula holds
\begin{multline}\label{eq:M1lrho}
M_1(l,\rho;h)=\frac{\zeta(2\rho)}{\pi^2l^{\rho}}H_0-
\frac{\zeta(\rho)}{\pi}\int_{-\infty}^{\infty}\frac{\tau_{ir}(l^2)\zeta(\rho+2ir)\zeta(\rho-2ir)}{|\zeta(1+2ir)|^2}h(r)dr+\\
\frac{\hat{\phi}(1-\rho)}{(4\pi l)^{1-\rho}}\mathscr{L}_{-4l^2}(\rho)
+
(2\pi)^{\rho-1}\sum_{n=1}^{\infty}\frac{1}{n^{1-\rho}}\mathscr{L}_{n^2-4l^2}(\rho)I\left(\rho; \frac{n}{l}\right).
\end{multline}
\end{lem}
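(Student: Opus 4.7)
The plan is to reduce the moment to the Kuznetsov trace formula \eqref{eq:KuzTrForm} and evaluate each of the three resulting pieces.

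For $\Re\rho>3/2$ the Dirichlet series $L(\sym^2 u_j,\rho)=\zeta(2\rho)\sum_{n\geq 1}\lambda_j(n^2)/n^\rho$ converges absolutely, and together with a standard majorization on the spectral side this justifies interchanging the $j$- and $n$-summations to give
$$M_1(l,\rho;h)=\zeta(2\rho)\sum_{n\geq 1}\frac{1}{n^\rho}\sum_{j}\alpha_j h(t_j)\lambda_j(l^2)\lambda_j(n^2).$$
I then apply \eqref{eq:KuzTrForm} with first argument $l^2$ and second argument $n^2$; this splits the right-hand side into a diagonal, an Eisenstein, and a Kloosterman contribution.

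The diagonal $\delta(l^2,n^2)H_0/\pi^2$ is nonzero only at $n=l$ and gives $\zeta(2\rho)H_0/(\pi^2 l^\rho)$, matching the first term of \eqref{eq:M1lrho}. For the Eisenstein contribution I swap the $n$-sum back inside the $r$-integral and apply \eqref{series with tau}, obtaining $\sum_n\tau_{ir}(n^2)/n^\rho=\zeta(\rho)\zeta(\rho+2ir)\zeta(\rho-2ir)/\zeta(2\rho)$; the two factors of $\zeta(2\rho)$ cancel, the sign flips upon moving the Eisenstein contribution to the opposite side of \eqref{eq:KuzTrForm}, and the second term of \eqref{eq:M1lrho} emerges.

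The substantive step is the Kloosterman piece
$$T_K:=\zeta(2\rho)\sum_{n\geq 1}\frac{1}{n^\rho}\sum_{q\geq 1}\frac{S(l^2,n^2;q)}{q}\phi\left(\frac{4\pi ln}{q}\right).$$
Here I would invoke Mellin inversion from Lemma \ref{phi properties}, writing $\phi(x)=(2\pi i)^{-1}\int_{(\Delta_0)}\hat\phi(w)x^{-w}dw$ on a contour with $\Re w$ negative enough that the $n$- and $q$-sums converge absolutely (legal by the Weil bound \eqref{Weilbound} and \eqref{phi Mellin est}). After the interchange, the essential object is the Kloosterman Dirichlet series $\sum_{q\geq 1}S(l^2,n^2;q)/q^{1-w}$, which by a Zagier-type identity (implicit in \eqref{Lbyk} and the meromorphic continuation of $\mathscr{L}_n$) can be expressed, up to explicit gamma and power factors, in terms of $\mathscr{L}_{n^2-4l^2}(\rho)$. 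The gamma factors arising in this identification, combined with the standard reflection identities that produce $\sin(\pi(\rho+w)/2)$, reassemble the $w$-integral into exactly $I(\rho;n/l)$ from \eqref{eq:integralI}, so that summing over $n\geq 1$ yields $(2\pi)^{\rho-1}\sum_{n\geq 1}n^{\rho-1}\mathscr{L}_{n^2-4l^2}(\rho)I(\rho;n/l)$. The isolated term $\hat\phi(1-\rho)(4\pi l)^{\rho-1}\mathscr{L}_{-4l^2}(\rho)$ then appears as a residue picked up in the contour manipulation that shifts $(\Delta_0)$ to the prescribed line $(\Delta)$, corresponding to the degenerate ``$n=0$'' analogue in the $\mathscr{L}_{n^2-4l^2}$ family.

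I expect the main obstacle to be precisely the Zagier-type identification of $\sum_qS(l^2,n^2;q)/q^{1-w}$ in terms of $\mathscr{L}_{n^2-4l^2}$ and the correct accounting of the residue that produces the $\mathscr{L}_{-4l^2}$ contribution. Everything else (interchanges of summation, contour shifts, and the cancellation of $\zeta(2\rho)$) is routine once absolute convergence is in place, guaranteed by $\Re\rho>3/2$ and the conditions $(C1)$--$(C4)$ on $h$.
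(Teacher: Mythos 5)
Your reduction to the Kuznetsov trace formula and your treatment of the diagonal and Eisenstein contributions are correct and match the paper's route (the paper itself only sketches the proof, deferring to \cite[Lemma 5.1]{BF1} with the Mellin transform of $J_{2k-1}$ replaced by $\hat{\phi}$). The contour window $1-\Re{\rho}<\Re{w}<-1/2$ needed for absolute convergence of the $n$- and $q$-sums exists precisely because $\Re{\rho}>3/2$, so the interchanges you perform are legitimate.

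The gap is in the Kloosterman term. You assert that, for \emph{fixed} $n$, the series $\sum_{q}S(l^2,n^2;q)/q^{1-w}$ can be expressed ``up to explicit gamma and power factors'' in terms of $\mathscr{L}_{n^2-4l^2}(\rho)$. No such identity exists: for $n=l=1$ this is the Linnik--Selberg series, whose analytic continuation is itself a deep fact tied to spectral theory, and in any case the parameter $\rho$ cannot appear in an identity obtained by manipulating the $q$-sum at fixed $n$, since $\rho$ enters only through the outer sum $\sum_n n^{-\rho}$. The actual mechanism (the content of Bykovskii's identity underlying \cite[Lemma 5.1]{BF1}, to which the paper appeals) is a Poisson/Voronoi-type summation in the Dirichlet-series variable $n$: opening $S(l^2,n^2;q)$, the dependence on $n$ is through a quadratic exponential, and executing the $n$-sum produces Gauss sums that count solutions of $x^2\equiv m^2-4l^2\pmod{4q}$; assembling over $q$ then yields $\mathscr{L}_{m^2-4l^2}(\rho)$ in a \emph{dual} variable $m\geq 0$. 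This duality is what converts $n^{-\rho}$ into $m^{\rho-1}$, produces the cosine-transform structure of $I(\rho;m/l)$ (compare the Remark giving $I(\rho;z)=z^{1-\rho}2^{\rho}\int_0^{\infty}\phi(x)x^{-\rho}\cos(xz/2)\,dx$), and makes the $\mathscr{L}_{-4l^2}(\rho)$ term arise as the zero frequency $m=0$ rather than as a residue of a shift of the $\hat{\phi}$-contour. Your proposal correctly guesses the shape of the answer, but the step that actually produces it is missing, and the identity you propose in its place would fail.
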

\begin{proof}
This is an analogue of \cite[Lemma 5.1]{BF1}. The only difference is that instead of the Petersson trace formula we now apply the Kuznetsov trace formula \eqref{eq:KuzTrForm}. Consequently, we have $\phi(4\pi ln/q)$ as the weight function in the sums of Kloosterman sums
instead of the Bessel function $J_{2k-1}(4\pi ln/q)$. Accordingly, it is required to replace everywhere in \cite[Lemma 5.1]{BF1} the Mellin transform of $J_{2k-1}(x)$ which is equal to
\begin{equation*}
\hat{J}_{2k-1}(w)=2^{w-1}\frac{\Gamma(k-1/2+w/2)}{\Gamma(k+1/2-w/2)}
\end{equation*}
by $\hat{\phi}(w).$ Finally, we remark that Lemma \ref{phi properties} guarantees that all sums and integrals remain to be absolutely convergent.
\end{proof}

%%%%%%%%%%%%%%%%%%%%%%%%%%%%%%%%%%%%%%%%%%%%%%%%%%%%%%%%%%%%%%%%%%%%%%%%%%%%%%
To prove the analytic continuation of \eqref{eq:M1lrho} to the region $0<\Re{\rho}<1$ it is required to investigate
\begin{multline*}
C(l;\rho):=\frac{\zeta(\rho)}{\pi}\int_{-\infty}^{\infty}\frac{\tau_{ir}(l^2)\zeta(\rho+2ir)\zeta(\rho-2ir)}{|\zeta(1+2ir)|^2}
h(r)dr=\\
\frac{\zeta(\rho)}{2\pi i}
\int_{(0)}\frac{\tau_{z/2}(l^2)\zeta(\rho+z)\zeta(\rho-z)}{\zeta(1+z)\zeta(1-z)}h\left(\frac{z}{2i}\right)dz.
\end{multline*}
To this end, we apply \cite[Corollary 2.4.2, p. 55]{CMR}, which yields for $0<\Re{\rho}<1$ that
\begin{multline*}
C(l;\rho)=
\frac{\zeta(\rho)}{2\pi}\int_{(0)}\frac{\tau_{z/2}(l^2)\zeta(\rho+z)\zeta(\rho-z)}{\zeta(1+z)\zeta(1-z)}h\left(\frac{z}{2i}\right)dz+\\
\zeta(\rho)\left(\res_{z=1-\rho}-\res_{z=\rho-1} \right)\frac{\tau_{z/2}(l^2)\zeta(\rho+z)\zeta(\rho-z)}{\zeta(1+z)\zeta(1-z)}h\left(\frac{z}{2i}\right).
\end{multline*}
Evaluating the residues, applying \eqref{Lbyk n=0} and \eqref{H0def}, we prove Theorem \ref{thm rho<1 exact}.

%%%%%%%%%%%%%%%%%%%%%%%%%%%%%%%%%%%%%%%%%%%%%%%%%%%%%%%%%%%%%%%%%%%%%%%%%%%%%%

%%%%%%%%%%%%%%%%%%%%%%%%%%%%%%%%%%%%%%%%%%%%%%%%%%%%%%%%%%%%%%%%%%%%%%%%%%%%%%
We now proceed to investigate more closely the integral $I(\rho;x)$.
\begin{lem}
Assume that $x \geq 2$ and $0<\Re{\rho}<1$. Then
\begin{multline}\label{integralIgeq2}
I(\rho;x)=\frac{2^{2-\rho}i}{\pi^{3/2}}\int_{-\infty}^{\infty}\frac{rh(r)}{\cosh(\pi r)}\left(\frac{2}{x}\right)^{2ir}
\sin\left( \pi(\rho/2-ir)\right)\frac{\Gamma(1/2-\rho/2+ir)\Gamma(1-\rho/2+ir)}{\Gamma(1+2ir)}\times \\ {}_2F_{1}\left(1/2-\rho/2+ir,1-\rho/2+ir,1+2ir;\frac{4}{x^2} \right)dr.
\end{multline}
\end{lem}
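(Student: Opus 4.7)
My plan is to substitute the Mellin--Barnes representation~\eqref{phi Mellin} of $\hat\phi(w)$ into the definition~\eqref{eq:integralI} of $I(\rho;x)$, swap the two integrations, and reduce the resulting inner $w$-integral to a classical Barnes representation of $\hyp$. First I would fix $\Delta$ in the nonempty range $0<\Delta<1-\Re\rho$. Inserting~\eqref{phi Mellin} into~\eqref{eq:integralI} and applying Fubini --- justified by Stirling's formula~\eqref{Stirling2} together with the decay assumption $(C3)$ on $h$ --- gives
$$I(\rho;x)=\frac{1}{\pi^2}\int_{-\infty}^{\infty}\frac{rh(r)}{\cosh(\pi r)}\,J(r)\,dr,$$
where
$$J(r):=\int_{(\Delta)}\Gamma(1-\rho-w)\sin\!\left(\pi\tfrac{\rho+w}{2}\right)\frac{\Gamma(w/2+ir)}{\Gamma(1-w/2+ir)}\,x^{w}\,dw.$$

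The bulk of the work is the evaluation of $J(r)$. I would change variables $w=2u$, apply Legendre's duplication formula to $\Gamma(1-\rho-2u)$, and use the reflection identity $\sin(\pi(\rho/2+u))\Gamma(\rho/2+u)\Gamma(1-\rho/2-u)=\pi$ to cancel the factor $\Gamma(1-\rho/2-u)$ that duplication produces. A further shift $v=u+ir$ (which preserves the vertical contour) pulls out $(x^2/4)^{-ir}=(2/x)^{2ir}$ and brings the integrand into the symmetric Barnes form
$$\frac{\Gamma(1/2-\rho/2+ir-v)\Gamma(v)}{\Gamma(\rho/2-ir+v)\Gamma(1-v+2ir)}\,(x^2/4)^v,$$
with an overall prefactor $2^{1-\rho}\sqrt\pi\,(2/x)^{2ir}$. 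Since $x\geq 2$, I would close this contour to the left and compute the resulting integral as a sum of residues at the simple poles $v=-n$ of $\Gamma(v)$, $n\geq 0$. One more application of the reflection formula to the resulting $\Gamma(\rho/2-ir-n)$ in the denominator produces a factor $(-1)^{n}\sin(\pi(\rho/2-ir))/\pi$ and replaces the denominator gamma by $\Gamma(1-\rho/2+ir+n)$ in the numerator. Rewriting the Gamma quotients in Pochhammer form then identifies the resulting power series with $\hyp(1/2-\rho/2+ir,1-\rho/2+ir;1+2ir;4/x^2)$, and collecting the prefactors $1/\pi^2$, $2^{1-\rho}\sqrt\pi$, and the $2i$ coming from the residue theorem gives the overall coefficient $2^{2-\rho}i/\pi^{3/2}$ claimed in~\eqref{integralIgeq2}.

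The main obstacle is the justification of the leftward contour closure: Stirling's formula gives polynomial bounds for the gamma ratio on large semicircles in $\Re v<0$, and the exponential decay of $(x^2/4)^v$ as $\Re v\to-\infty$ closes the argument whenever $x>2$; the borderline case $x=2$ (where $4/x^2=1$) should then be handled by continuity in $x$, or equivalently by analytic continuation of $\hyp$ in its parameters. A minor point is the careful tracking of signs, where the two applications of the reflection formula produce a factor $(-1)^{2n}=1$, leaving the hypergeometric series with the correct Pochhammer coefficients. Everything else is routine bookkeeping of gamma manipulations and verification that no additional poles (those of $\Gamma(1/2-\rho/2+ir-v)$, which have $\Re v=1/2-\Re\rho/2>\Delta/2$) are crossed when closing the contour.
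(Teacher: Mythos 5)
Your proof is correct and follows essentially the same route as the paper: substitute the Mellin--Barnes representation \eqref{phi Mellin} into \eqref{eq:integralI}, interchange the integrals, and shift the $w$-contour to the left past exactly the poles $w=-2ir-2n$ (your $v=-n$ after the substitutions $w=2u$, $v=u+ir$), whose residue series is identified with the ${}_2F_{1}$. The only difference is cosmetic: you apply the duplication and reflection formulas to the integrand before taking residues, whereas the paper evaluates the residues first and then invokes the duplication formula to recognize the resulting series as the hypergeometric function.
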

\begin{proof}
Substituting \eqref{phi Mellin} to \eqref{eq:integralI} we obtain
\begin{multline}\label{Igeq2 1}
I(\rho;x)=\frac{2i}{\pi}\int_{-\infty}^{\infty}\frac{rh(r)}{\cosh(\pi r)}
\frac{1}{2\pi i}\int_{(\Delta)}\frac{\Gamma(w/2+ir)}{\Gamma(1-w/2+ir)}\times \\
\Gamma(1-\rho-w)\sin\left( \pi \frac{\rho+w}{2}\right)x^wdwdr
\end{multline}
with $0<\Delta<1-\Re{\rho}.$ Moving the line of integration to the left and evaluating the residues at the points $w=-2ir-2j,\, j=0,1,2\ldots$, we obtain
\begin{multline*}
I(\rho;x)=\frac{2i}{\pi}\int_{-\infty}^{\infty}\frac{rh(r)}{\cosh(\pi r)}
2\sin\left( \pi(\rho/2-ir)\right)x^{-2ir}\sum_{j=0}^{\infty}\frac{\Gamma(1-\rho+2ir+2j)x^{-2j}}{\Gamma(1+2ir+j)j!}
dr.
\end{multline*}
Applying \cite[Eq. 5.5.5]{HMF} we complete the proof of \eqref{integralIgeq2}.
\end{proof}
%%%%%%%%%%%%%%%%%%%%%%%%%%%%%%%%%%%%%%%%%%%%%%%%%%%%%%%%%%%%%%%%%%%%%%%%%%%%%%

\begin{lem}
For $0<\Re{\rho}<1$ the following integral representation holds
\begin{equation}\label{integralIeq2}
I(\rho;2)=\Gamma(\rho-1/2)
\frac{2^{2-\rho}i}{\pi^{3/2}}\int_{-\infty}^{\infty}\frac{rh(r)}{\cosh(\pi r)}
\sin\left( \pi(\rho/2-ir)\right)\frac{\Gamma(1/2-\rho/2+ir)\Gamma(1-\rho/2+ir)}{\Gamma(\rho/2+ir)\Gamma(1/2+\rho/2+ir)}dr.
\end{equation}
\end{lem}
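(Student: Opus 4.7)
The plan is to specialize the formula \eqref{integralIgeq2} to $x=2$ and to evaluate the resulting Gauss hypergeometric factor at argument $1$. Setting $x=2$ gives $4/x^{2}=1$, so I first obtain
\begin{equation*}
I(\rho;2)=\frac{2^{2-\rho}i}{\pi^{3/2}}\int_{-\infty}^{\infty}\frac{rh(r)}{\cosh(\pi r)}\sin(\pi(\rho/2-ir))\frac{\Gamma(1/2-\rho/2+ir)\Gamma(1-\rho/2+ir)}{\Gamma(1+2ir)}\hyp(a,b;c;1)\,dr,
\end{equation*}
with $a=1/2-\rho/2+ir$, $b=1-\rho/2+ir$, $c=1+2ir$.

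Next I invoke Gauss's summation formula $\hyp(a,b;c;1)=\Gamma(c)\Gamma(c-a-b)/[\Gamma(c-a)\Gamma(c-b)]$, valid provided $\Re(c-a-b)>0$. A quick calculation yields $c-a-b=\rho-1/2$, $c-a=1/2+\rho/2+ir$, and $c-b=\rho/2+ir$, so the convergence condition becomes $\Re\rho>1/2$. Under this restriction the formula becomes
\begin{equation*}
\hyp(a,b;c;1)=\frac{\Gamma(1+2ir)\Gamma(\rho-1/2)}{\Gamma(1/2+\rho/2+ir)\Gamma(\rho/2+ir)}.
\end{equation*}
Substituting this back and observing that the $\Gamma(1+2ir)$ factors cancel immediately yields \eqref{integralIeq2} in the strip $1/2<\Re\rho<1$.

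To finish, I would extend the identity to the full range $0<\Re\rho<1$ by analytic continuation. The left-hand side $I(\rho;2)$ is meromorphic in $\rho$ on this strip by its defining contour integral, whereas the right-hand side is meromorphic because the gamma quotient inside the $r$-integral decays rapidly (by Stirling together with $(C3)$ and the factor $1/\cosh(\pi r)$), and all $\rho$-dependence outside the integral is manifestly meromorphic. Since the two meromorphic functions agree on the open substrip $1/2<\Re\rho<1$, they coincide on $0<\Re\rho<1$. The only subtle point is the apparent pole of $\Gamma(\rho-1/2)$ at $\rho=1/2$: this is simply the simple pole that $I(\rho;2)$ itself inherits from its analytic continuation, and no additional argument is required. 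The main (minor) obstacle is thus the boundary case $\Re\rho=1/2$ of Gauss's formula, which is handled entirely by this analytic-continuation step.
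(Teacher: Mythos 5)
Your proposal is correct and follows essentially the same route as the paper: specialize \eqref{integralIgeq2} at $x=2$ and evaluate the hypergeometric factor by Gauss's summation theorem (\cite[Eq.~15.4.20]{HMF}), after which the $\Gamma(1+2ir)$ factors cancel. Your additional remark about the condition $\Re(c-a-b)=\Re\rho-1/2>0$ and the extension to $0<\Re\rho<1$ by analytic continuation is a point the paper leaves implicit, and it is a reasonable way to justify the stated range.
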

\begin{proof}
Letting $x=2$ in \eqref{integralIgeq2} and applying  \cite[Eq.~15.4.20]{HMF}, we find that
\begin{equation*}
{}_2F_{1}\left(1/2-\rho/2+ir,1-\rho/2+ir,1+2ir;1 \right)=
\frac{\Gamma(\rho-1/2)\Gamma(1+2ir)}{\Gamma(\rho/2+ir)\Gamma(1/2+\rho/2+ir)}.
\end{equation*}
The assertion follows.
\end{proof}

%%%%%%%%%%%%%%%%%%%%%%%%%%%%%%%%%%%%%%%%%%%%%%%%%%%%%%%%%%%%%%%%%%%%%%%%%%%%%%
\begin{lem}
For $x<2$ and $0<\Re{\rho}<1$ we have
\begin{multline}\label{integralIleq2}
I(\rho;x)=\frac{2i}{\pi^{3/2}}\int_{-\infty}^{\infty}\frac{rh(r)}{\cosh(\pi r)}x^{1-\rho}
\cos\left( \pi(\rho/2+ir)\right)\frac{\Gamma(1/2-\rho/2+ir)\Gamma(1/2-\rho/2-ir)}{\Gamma(1/2)}\times \\ {}_2F_{1}\left(1/2-\rho/2+ir,1/2-\rho/2-ir,1/2;\frac{x^2}{4} \right)dr.
\end{multline}
\end{lem}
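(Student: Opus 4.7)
The plan is to return to the representation \eqref{Igeq2 1} and, exactly as in the proof of \eqref{integralIgeq2} but with the opposite direction of contour shift, move the $w$-line to the right rather than the left. Since $x<2$ we have $x^w$ decaying nicely in the right half-plane (after being paired with the exponential decay of $\Gamma(1-\rho-w)\sin(\pi(\rho+w)/2)$ away from its poles), so provided we control the contour at infinity the value of the inner $w$-integral is $-1$ times the sum of residues at the poles of $\Gamma(1-\rho-w)$ that are swept across, namely $w=1-\rho+j$ for $j=0,1,2,\ldots$.

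At $w=1-\rho+j$ one reads off $\sin\bigl(\pi(\rho+w)/2\bigr)=\sin\bigl(\pi(1+j)/2\bigr)$, which vanishes for odd $j$ and equals $(-1)^k$ for $j=2k$. Hence only even indices contribute, and for each $k\ge 0$ the residue involves $\Gamma(1/2-\rho/2+k+ir)/\Gamma(1/2+\rho/2-k+ir)$, together with the factor $x^{1-\rho+2k}/(2k)!$ (up to signs). Collecting these yields a power series in $x^{2}$ with leading factor $x^{1-\rho}$, which is what is needed.

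The next step is to recognise this power series as the advertised ${}_2F_1$. Here I would use the reflection formula to rewrite $1/\Gamma(1/2+\rho/2-k+ir)$ in terms of $\Gamma(1/2-\rho/2+k-ir)$; the $\sin$ factor that arises simplifies to $(-1)^{k}\cos(\pi(\rho/2+ir))$ by the identity $\cos(\theta-\pi k)=(-1)^{k}\cos\theta$, absorbing the sign $(-1)^{k}$ from the residue. After factoring $\Gamma(1/2-\rho/2\pm ir)$ out via $\Gamma(a+k)=(a)_{k}\Gamma(a)$ and using the duplication identity $(2k)!=4^{k}(1/2)_{k}k!$, the remaining sum is exactly
\[
\sum_{k\ge 0}\frac{(1/2-\rho/2+ir)_{k}(1/2-\rho/2-ir)_{k}}{(1/2)_{k}k!}(x^{2}/4)^{k}={}_{2}F_{1}\bigl(1/2-\rho/2+ir,1/2-\rho/2-ir,1/2;x^{2}/4\bigr),
\]
and combining with the prefactor $2i/\pi$ from \eqref{phi Mellin} (noting $\Gamma(1/2)=\sqrt{\pi}$) produces \eqref{integralIleq2}.

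The principal obstacle is justifying the contour shift: one must show that $\frac{1}{2\pi i}\int_{(\Delta')}$ tends to zero as $\Delta'\to\infty$ (along a sequence avoiding the poles), and also that the horizontal segments at height $\pm iR$ vanish as $R\to\infty$. For the horizontal segments, Stirling's formula \eqref{Stirling2} shows the integrand decays like an exponential in $|\Im w|$ thanks to the $\cosh(\pi r)$ in the denominator combined with the $\Gamma$-ratio. For the shift itself the subtle point is that on the line $\Re w=\Delta'$ between consecutive poles of $\Gamma(1-\rho-w)$, the reflection identity lets one bound that factor by $|\Gamma(w+\rho)|^{-1}$ times $|\sin(\pi(1-\rho-w))|^{-1}$, and since $x<2$ the factor $x^{w}$ beats the remaining polynomial-in-$\Delta'$ growth; once this is verified the rest is an algebraic manipulation.
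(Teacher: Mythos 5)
Your proposal is correct and follows essentially the same route as the paper: shift the $w$-contour in \eqref{Igeq2 1} to the right, collect the residues at $w=1-\rho+j$ (only even $j$ surviving because of the sine factor), and then identify the resulting power series as the stated ${}_2F_1$ via the reflection and duplication formulas. The only difference is that you spell out the justification of the contour shift, which the paper leaves implicit.
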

\begin{proof}
Moving the contour of integration in \eqref{Igeq2 1} to the right we cross the simple poles at $w=1-\rho+j$, $j=0,1,2,\ldots$ Consequently,
\begin{equation*}
I(\rho;x)=\frac{2i}{\pi}\int_{-\infty}^{\infty}\frac{rh(r)}{\cosh(\pi r)}x^{1-\rho}
\sum_{m=0}^{\infty}\frac{(-1)^m\Gamma(1/2-\rho+ir+m)x^{2m}}{\Gamma(1/2+\rho+ir-m)(2m)!}
dr.
\end{equation*}
Applying \cite[Eq. 5.5.3]{HMF} for $\Gamma(1/2+\rho+ir-m)$ and \cite[Eq. 5.5.5]{HMF} for $\Gamma(2m+1)$, we prove \eqref{integralIleq2}.
\end{proof}

%%%%%%%%%%%%%%%%%%%%%%%%%%%%%%%%%%%%%%%%%%%%%%%%%%%%%%%%%%%%%%%%%%%%%%%%%%%%%%
\begin{lem}
For $0<z<2$ and $0<\Re{\rho}<1$ we have
\begin{equation}\label{Ileq2}
I(\rho;z)=z^{1-\rho}\frac{2^{1+\rho}i}{\pi}\int_{-\infty}^{\infty}\frac{rh(r)}{\cosh(\pi r)}
\int_0^{\infty}x^{-\rho}J_{2ir}(x)
\cos\left(\frac{xz}{2}\right)dxdr.
\end{equation}
\end{lem}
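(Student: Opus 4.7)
The plan is to start from the representation \eqref{integralIleq2} for $I(\rho;z)$ already established, and to identify the hypergeometric factor appearing there with the explicit $J_{2ir}$--cosine integral on the right-hand side of \eqref{Ileq2}. Once the inner $x$-integral is evaluated in closed form, the equality \eqref{Ileq2} will follow from a single application of the Euler reflection formula $\Gamma(s)\Gamma(1-s)=\pi/\sin(\pi s)$.

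For the first step, I would evaluate $\int_0^\infty x^{-\rho}J_{2ir}(x)\cos(xz/2)\,dx$. Expanding the cosine as $\sum_{m\geq 0}(-1)^m(xz/2)^{2m}/(2m)!$ and applying the classical Mellin transform
$$\int_0^\infty x^{s-1}J_{2ir}(x)\,dx = 2^{s-1}\frac{\Gamma(s/2+ir)}{\Gamma(1-s/2+ir)}$$
term-by-term at $s=1-\rho+2m$, then simplifying using $(2m)!=4^m(1/2)_m\,m!$ and $\Gamma(1/2+\rho/2+ir-m)=(-1)^m\Gamma(1/2+\rho/2+ir)/(1/2-\rho/2-ir)_m$, one recognises the resulting series as a $\hyp$ at argument $z^2/4$. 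This gives, for $0<z<2$ and $0<\Re{\rho}<1$,
$$\int_0^\infty x^{-\rho}J_{2ir}(x)\cos(xz/2)\,dx = \frac{1}{2^{\rho}}\frac{\Gamma(1/2-\rho/2+ir)}{\Gamma(1/2+\rho/2+ir)}\hyp\!\left(\frac{1-\rho}{2}+ir,\frac{1-\rho}{2}-ir;\frac{1}{2};\frac{z^2}{4}\right).$$
The restriction $0<z<2$ is precisely what ensures convergence of the resulting hypergeometric series; this evaluation also appears as formula 6.699.2 in Gradshteyn--Ryzhik.

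For the second step, applying the reflection formula at $s=1/2+\rho/2+ir$ gives
$$\cos\bigl(\pi(\rho/2+ir)\bigr)\,\Gamma(1/2+\rho/2+ir)\,\Gamma(1/2-\rho/2-ir)=\pi,$$
which converts the factor $\cos(\pi(\rho/2+ir))\Gamma(1/2-\rho/2+ir)\Gamma(1/2-\rho/2-ir)/\Gamma(1/2)$ appearing in \eqref{integralIleq2} into $\sqrt\pi\,\Gamma(1/2-\rho/2+ir)/\Gamma(1/2+\rho/2+ir)$. Substituting the first-step evaluation and absorbing the leftover $2^{\rho}$ into the original prefactor $2i/\pi^{3/2}$ produces the prefactor $2^{1+\rho}i/\pi$ of \eqref{Ileq2} exactly.

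The main obstacle is the formal justification of the term-by-term Mellin evaluation in the first step, since the individual integrals $\int_0^\infty x^{-\rho+2m}J_{2ir}(x)\,dx$ fall outside the absolute-convergence strip $-\Re(2ir)<\Re s<3/2$ once $m$ is large. This is sidestepped by invoking 6.699.2 itself, whose standard derivation via Parseval's theorem for Mellin transforms handles the full integral directly without recourse to termwise manipulation.
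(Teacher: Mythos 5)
Your proposal is correct and follows essentially the same route as the paper: both start from \eqref{integralIleq2}, evaluate $\int_0^\infty x^{-\rho}J_{2ir}(x)\cos(xz/2)\,dx$ via Gradshteyn--Ryzhik 6.699.2, and use the reflection formula $\Gamma(s)\Gamma(1-s)=\pi/\sin(\pi s)$ to match the Gamma-factor combinations, the only difference being the direction in which the identities are chained and your optional sketch of how 6.699.2 itself is derived.
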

\begin{proof}
Applying  \cite[Eq. 6.699.2]{GR} we have for $0<z<2,$ $-1/2<\Re{\rho}<1$
\begin{multline*}
\int_0^{\infty}x^{-\rho}J_{2ir}(x)\cos\left(\frac{xz}{2}\right)dx=
\frac{2^{-\rho}\Gamma(1/2-\rho/2+ir)}{\Gamma(1/2+\rho/2+ir)}\times \\
{}_2F_{1}\left(1/2-\rho/2+ir,1/2-\rho/2-ir,1/2;\frac{z^2}{4} \right).
\end{multline*}
Using \cite[Eq. 5.5.3]{HMF} we infer that
\begin{multline}\label{bessel to F21}
\int_0^{\infty}x^{-\rho}J_{2ir}(x)\cos\left(\frac{xz}{2}\right)dx=
\frac{\cos\left( \pi(\rho/2+ir)\right)}{2^{\rho}\pi}\Gamma(1/2-\rho/2+ir)\Gamma(1/2-\rho/2-ir)\times \\ {}_2F_{1}\left(1/2-\rho/2+ir,1/2-\rho/2-ir,1/2;\frac{x^2}{4} \right).
\end{multline}
Finally, \eqref{Ileq2} follows from \eqref{integralIleq2} and \eqref{bessel to F21}.
\end{proof}

%%%%%%%%%%%%%%%%%%%%%%%%%%%%%%%%%%%%%%%%%%%%%%%%%%%%%%%%%%%%%%%%%%%%%%%%%%%%%%
\begin{rem}
The formula \eqref{Ileq2} is also valid for $z\geq 2.$ It is possible to rewrite \eqref{Ileq2} as follows
\begin{equation*}
I(\rho;z)=z^{1-\rho}2^{\rho}
\int_0^{\infty}\phi(x)x^{-\rho}\cos\left(\frac{xz}{2}\right)dx,
\end{equation*}
where $\phi(x)$ is given by \eqref{phidef}.
\end{rem}

%%%%%%%%%%%%%%%%%%%%%%%%%%%%%%%%%%%%%%%%%%%%%%%%%%%%%%%%%%%%%%%%%%%%%%%%%%%%%%

As the final step, we derive an explicit formula at the critical point. 
\begin{thm}\label{thm rho=1/2 exact} For  any function $h(t)$  satisfying the conditions $(C1)-(C4)$ we have
\begin{multline}\label{eq:M1lrho=1/2}
M_1(l,1/2;h)=\frac{1}{\pi^2l^{1/2}}\int_{-\infty}^{\infty}rh(r)\tanh(\pi r)\Biggl(
\frac{3}{2}\gamma-\frac{\pi}{4}-\log l-\frac{3}{2}\log(2\pi)+\\+
\frac{\psi(1/4+ir)+\psi(1/4-ir)}{2}\Biggr)dr-
\frac{\zeta(1/2)}{\pi}\int_{-\infty}^{\infty}\frac{\tau_{ir}(l^2)\zeta(1/2+2ir)\zeta(1/2-2ir)}{|\zeta(1+2ir)|^2}h(r)dr-\\-
\frac{2\zeta(0)}{\zeta(3/2)}\tau_{1/4}(l^2)h\left(\frac{1}{4i}\right)
+\frac{\hat{\phi}(1/2)}{(4\pi l)^{1/2}}\mathscr{L}_{-4l^2}(1/2)+\\
(2\pi)^{-1/2}\sum_{n=1}^{2l-1}\frac{1}{n^{1/2}}\mathscr{L}_{n^2-4l^2}(1/2)I\left(\frac{1}{2}; \frac{n}{l}\right)+
(2\pi)^{-1/2}\sum_{n=2l+1}^{\infty}\frac{1}{n^{1/2}}\mathscr{L}_{n^2-4l^2}(\rho)I\left(\frac{1}{2}; \frac{n}{l}\right).
\end{multline}
\end{thm}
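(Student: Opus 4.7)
The plan is to pass to the limit $\rho\to 1/2$ in Theorem~\ref{thm rho<1 exact}. Since $L(\sym^2 u_j,\rho)$ is entire in $\rho$, the left-hand side $M_1(l,\rho;h)$ is entire, so the right-hand side of \eqref{eq:M1lrho<1} must be regular at $\rho=1/2$ even though individual terms in it are not. All but two of those terms are continuous at $\rho=1/2$ and contribute their value there, producing the matching terms of \eqref{eq:M1lrho=1/2}: the integral involving $\zeta(\rho+2ir)\zeta(\rho-2ir)/|\zeta(1+2ir)|^2$, the residue term $-\frac{2\zeta(2\rho-1)}{\zeta(2-\rho)}\tau_{(1-\rho)/2}(l^2)h((1-\rho)/(2i))$, the $\mathscr{L}_{-4l^2}(\rho)$-term, and the two truncated sums over $n\ne 2l$ weighted by $I(\rho;n/l)$ (whose representations \eqref{integralIleq2}--\eqref{integralIgeq2} are holomorphic in $\rho$ on $0<\Re\rho<1$).

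The delicate step is the cancellation of the two genuinely singular pieces: the first term $\frac{\zeta(2\rho)}{\pi^2l^{\rho}}H_0$, singular through $\zeta(2\rho)$, and the $n=2l$ contribution involving $\zeta(2\rho-1)I(\rho;2)$, singular through the factor $\Gamma(\rho-1/2)$ visible in \eqref{integralIeq2}. I would write $\rho=1/2+\delta$ and use the Laurent expansions
\[
\zeta(1+2\delta)=\tfrac{1}{2\delta}+\gamma+O(\delta),\qquad \Gamma(\delta)=\tfrac{1}{\delta}-\gamma+O(\delta),
\]
\[
\zeta(2\delta)=-\tfrac{1}{2}-\delta\log(2\pi)+O(\delta^2)
\]
(the last via $\zeta'(0)=-\tfrac{1}{2}\log(2\pi)$), together with $l^{-\rho}=l^{-1/2}(1-\delta\log l+\cdots)$ and $(2\pi)^{\delta}=1+\delta\log(2\pi)+\cdots$. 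At $\rho=1/2$ the Gamma quotient in \eqref{integralIeq2} collapses to $1$, so $I(\rho;2)/\Gamma(\rho-1/2)$ reduces at leading order to $\frac{2^{3/2}i}{\pi^{3/2}}\int\frac{rh(r)}{\cosh(\pi r)}\sin(\pi(1/4-ir))\,dr$, and using $\sin(\pi(1/4-ir))=\frac{1}{\sqrt{2}}(\cosh(\pi r)-i\sinh(\pi r))$ together with parity of $h$ (which kills $\int rh(r)\,dr$), this evaluates to $\frac{2}{\pi^{3/2}}H_0$. A short computation then verifies that the $1/\delta$-parts of the two singular pieces cancel exactly.

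To extract the finite part I differentiate the integrand of \eqref{integralIeq2} with respect to $\rho$ at $\rho=1/2$: the sine produces $\frac{\pi}{2}\cos(\pi(1/4-ir))$ and the logarithmic derivative of the Gamma quotient produces $-\psi(1/4+ir)-\psi(3/4+ir)$. Applying the reflection identity $\psi(3/4+ir)=\psi(1/4-ir)+\pi\cot(\pi(1/4-ir))$ together with $\sin(\pi(1/4-ir))\cot(\pi(1/4-ir))=\cos(\pi(1/4-ir))$, the $\cot$-term converts into an extra $\cos$-piece which combines with the sine-derivative $\cos$-piece to produce (after evaluating the parity-surviving $\sinh$-part against $H_0$) the constant $-\frac{\pi}{4}$ in \eqref{eq:M1lrho=1/2}. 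The leftover symmetric combination $\sin(\pi(1/4-ir))[\psi(1/4+ir)+\psi(1/4-ir)]$ reduces via parity of $h$ to $\int rh(r)\tanh(\pi r)\frac{\psi(1/4+ir)+\psi(1/4-ir)}{2}\,dr$, which is the digamma integrand of \eqref{eq:M1lrho=1/2}. The logarithms ($-\log l$ from $l^{-\delta}$, $-\log(2\pi)$ from $\zeta(2\delta)$, the logs from the exponential prefactors $(2\pi)^{\delta}$ and $(2l)^{\delta}$, and $+\log 2/2$ from $\partial_\rho 2^{2-\rho}|_{\rho=1/2}$) assemble to $-\log l-\frac{3}{2}\log(2\pi)$; the $\gamma$-contributions from $\zeta(1+2\delta)$ and from $\zeta(2\delta)\Gamma(\delta)$ assemble to $\frac{3}{2}\gamma$.

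The main obstacle is this bookkeeping: numerous separate constants ($\gamma$'s, $\log 2$'s, $\log\pi$'s, the two digammas, and the $\pi/4$) must be tracked through the Laurent expansions and then merged using the digamma reflection formula and the parity of $h$ before the compact integrand of \eqref{eq:M1lrho=1/2} emerges. Once this is done, all the non-singular terms of \eqref{eq:M1lrho<1} pass termwise through $\rho\to 1/2$, and Theorem~\ref{thm rho=1/2 exact} follows.
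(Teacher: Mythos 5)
Your proposal is correct and follows essentially the same route as the paper: isolate the two singular terms $\frac{\zeta(2\rho)}{\pi^2 l^{\rho}}H_0$ and $(2\pi)^{\rho-1}\zeta(2\rho-1)I(\rho;2)$, expand around $\rho=1/2$ using \eqref{integralIeq2}, cancel the poles, and simplify the finite part via the digamma reflection formula, the addition formulas for $\sin(\pi(1/4-ir))$ and $\cos(\pi(1/4-ir))$, and the parity of $h$. The only (immaterial) difference is that you use direct Laurent expansions of $\zeta(2\delta)\Gamma(\delta)$ where the paper first applies the functional equation \eqref{zeta}.
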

\begin{proof}
In order to prove \eqref{eq:M1lrho=1/2} it is required (see \eqref{eq:M1lrho<1}) to investigate
\begin{equation*}
\lim_{\rho\rightarrow1/2}\left(\frac{\zeta(2\rho)}{\pi^2l^{\rho}}\int_{-\infty}^{\infty}rh(r)\tanh(\pi r)dr+
(2\pi)^{\rho-1}\zeta(2\rho-1)I(\rho;2)\right).
\end{equation*}
Let $\rho=1/2+u$. Applying \eqref{integralIeq2}, we reduce our problem to evaluting
\begin{multline}\label{lim 1}
\lim_{u\rightarrow0}\Biggl(
\frac{\zeta(1+2u)}{\pi^2l^{1/2+u}}\int_{-\infty}^{\infty}rh(r)\tanh(\pi r)dr+
\zeta(2u)\Gamma(u)
\frac{2i}{\pi^{2-u}(2l)^{1/2-u}}\times \\
\int_{-\infty}^{\infty}\frac{rh(r)}{\cosh(\pi r)}
\sin\left( \pi(1/4+u/2-ir)\right)\frac{\Gamma(1/4-u/2+ir)\Gamma(3/4-u/2+ir)}{\Gamma(1/4+u/2+ir)\Gamma(3/4+u/2+ir)}dr
\Biggr).
\end{multline}
Using the functional equation for the Riemann zeta-function \cite[Eq. 25.4.1]{HMF}, namely
\begin{equation}\label{zeta}
\zeta(2u)\Gamma(u)=(2\pi)^{2u}\frac{\Gamma(1-2u)}{\Gamma(1-u)}\zeta(1-2u),
\end{equation}
we obtain that the limit \eqref{lim 1} is equal to
\begin{multline}\label{lim 2}
\int_{-\infty}^{\infty}rh(r)\Biggl(
\frac{2\gamma\tanh(\pi r)}{\pi^2l^{1/2}}-
\frac{\log l}{2\pi^2l^{1/2}}\tanh(\pi r)-
\frac{i}{\pi^2(2l)^{1/2}}\frac{\sin(\pi(1/4-ir))}{\cosh(\pi r)}\Bigl(\\
\log(8\pi^3l)-\psi(1)-\psi(1/4+ir)-\psi(3/4+ir)
\Bigr)-
\frac{i}{\pi^2(2l)^{1/2}}\frac{\pi\cos(\pi(1/4-ir))}{2\cosh(\pi r)}\Biggr)dr.
\end{multline}
According to  \cite[Eq. 5.5.4]{HMF} we have
\begin{equation}\label{psi=psi}
\psi(3/4+ir)=\psi(1/4-ir)+\frac{\pi}{\tan(1/4-ir)}.
\end{equation}
Substituting \eqref{psi=psi} to \eqref{lim 2}, applying the following identities
\begin{equation*}
\sin(\pi(1/4-ir))=\sin(\pi/4)\cosh(\pi r)-i\cos(\pi/4)\sinh(\pi r),
\end{equation*}
\begin{equation*}
\cos(\pi(1/4-ir))=\cos(\pi/4)\cosh(\pi r)+i\sin(\pi/4)\sinh(\pi r),
\end{equation*}
and  using the facts that $h(r)$ is even and that the integral of an odd function over the real line is equal to zero, we finally complete the proof.

\end{proof}
%%%%%%%%%%%%%%%%%%%%%%%%%%%%%%%%%%%%%%%%%%%%%%%%%%%%%%%%%%%%%%%%%%%%%%%%%%%%%%

%%%%%%%%%%%%%%%%%%%%%%%%%%%%%%%%%%%%%%%%%%%%%%%%%%%%%%%%%%%%%%%%%%%%%%
\section{Proof of main theorems}\label{sec: main thms}
%%%%%%%%%%%%%%%%%%%%%%%%%%%%%%%%%%%%%%%%%%%%%%%%%%%%%%%%%%%%%%%%%%%%%%

First, let us recall some properties of the function $\omega_T(r)$ defined by \eqref{omega def}.
Assume that $T^{\epsilon}<G<T^{1-\epsilon}.$ For any $A>1$ and some $c>0$ we have (see \cite{IvJut})
\begin{equation}\label{omega1}
\omega_T(r)=1+O(r^{-A})\text{ if } T+cG\sqrt{\log T}<r<2T-cG\sqrt{\log T},
\end{equation}
\begin{equation}\label{omega2}
\omega_T(r)=O((|r|+T)^{-A})\text{ if }
r<T-cG\sqrt{\log T}\text{ or } r>2T+cG\sqrt{\log T},
\end{equation}
and otherwise
\begin{equation}\label{omega3}
\omega_T(r)=1+O(G^3(G+\min(|r-T|,|r-2T|))^{-3}).
\end{equation}
Furthermore, the following trivial estimates hold
\begin{equation}\label{omega4}
\omega_T(r)\ll TG^{-1}\exp\left(-\frac{(r-T)^2}{G^2}\right)\text{ if } r<T,
\end{equation}
\begin{equation}\label{omega5}
\omega_T(r)\ll TG^{-1}\exp\left(-\frac{(r-2T)^2}{G^2}\right)\text{ if } r>2T.
\end{equation}
For an arbitrary large integer $N$, we define
\begin{equation}\label{qN def}
q_N(r):=\frac{(r^2+1/4)\ldots(r^2+(N-1/2)^2)}{(r^2+100N^2)^N},
\end{equation}
\begin{equation}\label{hN def}
h(K,N;r):=q_N(r)\exp\left(-\frac{(r-K)^2}{G^2}\right)+q_N(r)\exp\left(-\frac{(r+K)^2}{G^2}\right).
\end{equation}
Note that $q_N(r)=1+O(1/r^2)$ and that the function $h(K,N;r)$  satisfies the conditions $(C1)-(C4).$ 
%%%%%%%%%%%%%%%%%%%%%%%%%%%%%%%%%%%%%%%%%%%%%%%%%%%%%%%%%%%%%%%%%%%%%%%%%%%%%%%%%%%%%%%%%%%%%%%%%%%%%%%%%%%%%%%%%%

%%%%%%%%%%%%%%%%%%%%%%%%%%%%%%%%%%%%%%%%%%%%%%%%%%%%%%%%%%%%%%%%%%%%%%%%%%%%%%%%%%%%%%%%%%%%%%%%%%%%%%%%%%%%%%%%%%

%%%%%%%%%%%%%%%%%%%%%%%%%%%%%%%%%%%%%%%%%%%%%%%%%%%%%%%%%%%%%%%%%%%%%%%%%%%%%%

%%%%%%%%%%%%%%%%%%%%%%%%%%%%%%%%%%%%%%%%%%%%%%%%%%%%%%%%%%%%%%%%%%%%%%%%%%%%%%%%%%%%%%%%%%%%%%%%%%%%%%%%%%%%%%%%%%

\subsection{Proof of Theorem \ref{thm rho=1/2+it}}
%%%%%%%%%%%%%%%%%%%%%%%%%%%%%%%%%%%%%%%%%%%%%%%%%%%%%%%%%%%%%%%%%%%%%%%%%%%%%%%%%%%%%%%%%%%%%%%%%%%%%%%%%%%%%%%%%%
In order to prove Theorem \ref{thm rho=1/2+it}, we apply Theorem \ref{thm rho<1 exact} with the test function \eqref{hN def} and then integrate the result over $K$ to obtain \eqref{omega def}.

Taking $\rho=1/2+it$ in Theorem \ref{thm rho<1 exact}, we find that the main term is equal to 
\begin{multline}\label{eq:main term with t}
MT(T;G;l;t)=\frac{\zeta(2s)}{\pi^2l^{s}}\int_{-\infty}^{\infty}r(\omega_T(r)+\omega_T(-r))\tanh(\pi r)dr\\+
(2\pi)^{s-1}\zeta(2s-1)\Gamma(s-1/2)
\frac{2^{2-s}i}{\pi^{3/2}}\times\\
\int_{-\infty}^{\infty}\frac{r(\omega_T(r)+\omega_T(-r))}{\cosh(\pi r)}
\sin\left( \pi(s/2-ir)\right)\frac{\Gamma(1/2-s/2+ir)\Gamma(1-s/2+ir)}{\Gamma(s/2+ir)\Gamma(1/2+s/2+ir)}dr.
\end{multline}

Next, we proceed to estimate the contribution of the remaining terms. 
In the arguments below we will frequently use \cite[Eq. 3.323.2]{GR} and \cite[Eq. 3.462.2]{GR}, namely
\begin{equation}\label{exp integral1}
\int_{-\infty}^{\infty}\exp(-p^2x^2+qx)dx=\frac{\pi^{1/2}}{p}\exp\left(\frac{q^2}{4p^2}\right),
\quad\hbox{if}\quad\Re(p^2)>0,
\end{equation}
\begin{equation}\label{exp integral2}
\int_{-\infty}^{\infty}x^n\exp(-x^2+qx)dx=P_n(q)\exp\left(\frac{q^2}{4}\right),
\end{equation}
where $n$ is a nonnegative integer and $P_n(q)$ is a polynomial of degree $n.$

%%%%%%%%%%%%%%%%%%%%%%%%%%%%%%%%%%%%%%%%%%%%%%%%%%%%%%%%%%%%%%%%%%%%%%%%%%%%%%

\begin{lem}
For $\rho=1/2+2it$, $|t|\ll T^{1-\epsilon},$ $|t|\ll T^{2-\epsilon}/G^2$ and $x\gg T^{\epsilon}$ the following estimate holds
\begin{equation}\label{Igeq2 estimate0}
\frac{1}{G\pi^{1/2}}\int_T^{2T}I(\rho;x)dK\ll x^{-2N}T^{2}.
\end{equation}
\end{lem}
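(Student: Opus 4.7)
The plan is to start from the integral representation \eqref{integralIgeq2} for $I(\rho;x)$, which applies here since $x \gg T^{\epsilon} \geq 2$. First I would exchange the $K$-integration with the $r$-integration (Fubini is justified by the Gaussian decay in $h(K,N;r)$), which replaces the test function by its average $\bar h(r) := q_N(r)(\omega_T(r) + \omega_T(-r))$.

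The essential step is then to shift the $r$-contour from the real axis down to $\Im r = -N$. This is legitimate because the zeros of $q_N(r)$ at $r = \pm(n+1/2)i$ for $n = 0, \dots, N-1$ exactly cancel the simple poles of $1/\cosh(\pi r)$ in the strip $-N < \Im r < 0$ (this is the mechanism underlying condition $(C4)$); the gamma factors $\Gamma(1/2-\rho/2+ir)$ and $\Gamma(1-\rho/2+ir)$ have poles only in the upper half-plane (at $\Im r = 1/4+n$ and $\Im r = 3/4+n$); and the combination ${}_2F_1(\ldots)/\Gamma(1+2ir)$ is entire in $r$ as a regularized hypergeometric. The Gaussian concentration of $\omega_T$ controls horizontal infinity.

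On the shifted contour $r = s - iN$ the factor $(2/x)^{2ir}$ has magnitude exactly $(2/x)^{2N}$, producing the desired $x^{-2N}$ decay. I would then estimate the remaining integrand using Stirling's formula \eqref{Stirling2}: the exponential factors from $\sin(\pi(\rho/2-ir))$, from the gamma quotient $\Gamma(1/2-\rho/2+ir)\Gamma(1-\rho/2+ir)/\Gamma(1+2ir)$, and from $1/\cosh(\pi r)$ all cancel, leaving a polynomial factor of order $|s|^{-1/2}$ on the effective support $|s|\in[T,2T]$ of $\bar h(r)$. For the hypergeometric factor ${}_2F_1(\ldots;4/x^2)$, since $|4/x^2|<1$ and the parameters have positive real parts after the shift, I would use its Euler integral representation to bound it in terms of $|\Gamma(c)|/(|\Gamma(b)||\Gamma(c-b)|)$ times an $O(1)$ Beta integral; a Stirling calculation shows this ratio is of order $T^{1/2}$.

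Combining these estimates, the pointwise integrand on the shifted contour is of size $T\cdot T^{-1/2}\cdot(2/x)^{2N}\cdot T^{1/2} = T(2/x)^{2N}$ on the measure-$T$ effective support of $\bar h$, which integrates to the claimed bound $\ll T^2 x^{-2N}$. The main obstacle will be justifying the uniform bound on the hypergeometric factor: one must verify that $(1-4/x^2)^{-\Re a}$ stays $O(1)$ for $x \gg T^{\epsilon}$ and that the Stirling asymptotics for the gamma quotient behave regularly under the constraints on $|t|$, which together with the ranges $|t| \ll T^{1-\epsilon}/G$ and $|t| \ll T^{2-\epsilon}/G^2$ ensure $|t|\ll |s|\sim T$ throughout the integration.
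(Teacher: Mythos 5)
Your argument is correct and is essentially the paper's own proof: the paper merely substitutes the Euler integral representation of the ${}_2F_{1}$ into \eqref{integralIgeq2} \emph{before} shifting the $r$-contour to $\Im r=-N$, so that the $x^{-2N}$ decay is read off from $\bigl(y(1-y)/(x^2/4-y)\bigr)^{ir}$ rather than from $(2/x)^{2ir}$ together with a separate bound on the hypergeometric factor, but the mechanism (the zeros of $q_N$ permitting the shift, Stirling cancellation of the exponential factors, and the trivial estimation of the remaining integrals) is identical. The one point you should make fully quantitative is the last sentence of your proposal: the shifted integral runs over the whole line, not just the effective support, so for $|\Re r|<|t|$ the combination of sine, cosh and Gamma factors grows like $\exp\left(c(|t|-|\Re r|)\right)$ and you must check that this is beaten by the Gaussian in $h$, i.e.\ that $\exp(2|t|)\exp\left(-(T-|t|)^2/G^2\right)$ is negligible --- which is precisely what the hypotheses $|t|\ll T^{1-\epsilon}$ and $|t|\ll T^{2-\epsilon}/G^2$ (not $|t|\ll T^{1-\epsilon}/G$, as written in your last paragraph) are there to guarantee, exactly as in \eqref{Gamma est1}.
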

\begin{proof}
Using the integral representation \cite[Eq. 15.6.1]{HMF} for the hypergeometric function in \eqref{integralIgeq2}, we obtain
\begin{multline}\label{Igeq2est1}
I(\rho;x)=\frac{2^{2-\rho}i}{\pi^{3/2}}
\int_0^1\frac{y^{-1/4-it}(1-y)^{-3/4+it}}{(1-4y/x^2)^{1/4-it}}
\int_{-\infty}^{\infty}\frac{rh(r)}{\cosh(\pi r)}
\sin\left( \pi(\rho/2-ir)\right)\times\\
\frac{\Gamma(1/4+i(r-t))}{\Gamma(1/4+i(r+t))}
\left(\frac{y(1-y)}{x^2/4-y}\right)^{ir}drdy.
\end{multline}
Since \eqref{hN def} contains the multiple $q_N(r)$ defined  by \eqref{qN def}, we do not cross any poles by moving the line of integration \eqref{Igeq2est1} to $\Im{r}=-N$. Applying the Stirling formula \eqref{Stirling2} we show that
\begin{equation*}
\frac{\sin\left( \pi(\rho/2-ir)\right)}{\cosh(\pi r)}\frac{\Gamma(1/4+i(r-t))}{\Gamma(1/4+i(r+t))}\ll
\frac{|r+t|^{1/4}}{|r-t|^{1/4}}\exp\left(\max(2t-2|r|,0)\right).
\end{equation*}
Since $|t|\ll T^{1-\epsilon}$ and $|t|\ll T^{2-\epsilon}/G^2$ we have
\begin{equation}\label{Gamma est1}
\exp\left(\max(2t-2|r|,0)\right)h(r)\ll 1.
\end{equation}
Using \eqref{Gamma est1} we estimate all integrals in \eqref{Igeq2est1} trivially  and  derive \eqref{Igeq2 estimate0}.
\end{proof}
%%%%%%%%%%%%%%%%%%%%%%%%%%%%%%%%%%%%%%%%%%%%%%%%%%%%%%%%%%%%%%%%%%%%%%%%%%%%%%

\begin{lem}\label{lemma I est z>2}
For $\rho=1/2+2it$, $|t|\ll T^{1-\epsilon}/G$ and $x>2$  we have
\begin{equation}\label{Igeq2 estimate1}
\frac{1}{G\pi^{1/2}}\int_T^{2T}I(\rho;x)dK\ll
\exp\left(-G^2\log^2\left(x/2+\sqrt{x^2/4-1}\right)\right)+T^{-A},
\end{equation}
where $A>0$ is an arbitrary fixed number.
\end{lem}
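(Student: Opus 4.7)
The plan is to combine the hypergeometric factor and the oscillatory prefactor $(2/x)^{2ir}$ in \eqref{integralIgeq2} into a single exponential in $r$, and then to shift the $r$-contour downward to produce the claimed exponential decay. Substituting Euler's integral representation
\begin{equation*}
{}_{2}F_{1}\!\left(\tfrac{1}{2}-\tfrac{\rho}{2}+ir,\,1-\tfrac{\rho}{2}+ir,\,1+2ir;\,\tfrac{4}{x^{2}}\right)
=\frac{\Gamma(1+2ir)}{\Gamma(1-\rho/2+ir)\,\Gamma(\rho/2+ir)}\int_{0}^{1}\!t^{-\rho/2+ir}(1-t)^{\rho/2+ir-1}\!\left(1-\tfrac{4t}{x^{2}}\right)^{(\rho-1)/2-ir}\!dt
\end{equation*}
into \eqref{integralIgeq2}, the $\Gamma$-quotient telescopes to $\Gamma(1/2-\rho/2+ir)/\Gamma(\rho/2+ir)$ and all $r$-dependent powers collapse to a single factor $\bigl[4t(1-t)/(x^{2}-4t)\bigr]^{ir}$. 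A direct calculation (write $y=x/2$, $s=\sqrt{y^{2}-1}$ and solve $\partial_{t}=0$) shows that $g(t):=4t(1-t)/(x^{2}-4t)$ attains a unique maximum on $(0,1)$ at $t_{\ast}=y(y-s)$, with value $(y-s)^{2}=\exp(-2\arcosh(x/2))$.

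Next I would shift the outer $r$-contour from $\Im r=0$ to $\Im r=-c$ for a free parameter $c\in(0,N+1/2)$. The factor $q_{N}(r)$ in \eqref{hN def} neutralises the poles of $1/\cosh(\pi r)$ inside the strip, and the other potential poles of the $\Gamma$-quotient all lie in the upper half-plane since $\Re\rho=1/2$, so the shift is admissible. Writing $r=u-ic$, the phase satisfies $|g(t)^{ir}|=g(t)^{c}\le\exp(-2c\arcosh(x/2))$ uniformly in $t\in(0,1)$, the Gaussian in $h(K,N;r)$ picks up a factor $\exp(c^{2}/G^{2})$, and Stirling's formula \eqref{Stirling2} together with the hypothesis $|t|\ll T^{1-\epsilon}/G$ shows that
\begin{equation*}
r\,q_{N}(r)\,\frac{\sin(\pi(\rho/2-ir))\,\Gamma(1/2-\rho/2+ir)}{\cosh(\pi r)\,\Gamma(\rho/2+ir)}
\ll (1+|u|)^{O(1)}
\end{equation*}
on the shifted line. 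Integrating against the surviving Gaussian in $u$, bounding the (at worst logarithmic as $x\to 2^{+}$) $t$-integral trivially, and averaging over $K\in[T,2T]$ yields
\begin{equation*}
\frac{1}{G\pi^{1/2}}\int_{T}^{2T}I(\rho;x)\,dK \;\ll\; T^{O(1)}\exp\!\left(\tfrac{c^{2}}{G^{2}}-2c\arcosh(x/2)\right)
\end{equation*}
for every admissible $c$.

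Finally I would optimise $c$. The natural choice $c=G^{2}\arcosh(x/2)$ minimises the exponent at $-G^{2}\arcosh^{2}(x/2)$ and, after absorbing the $T^{O(1)}$ prefactor, reproduces the first term of the claimed bound whenever $G\arcosh(x/2)\gg\sqrt{\log T}$. When this choice violates the constraint $c<N+1/2$, I would instead take $c=N$; since $N$ is arbitrary, the resulting factor $\exp(-2N\arcosh(x/2)+N^{2}/G^{2})$ can be made smaller than any prescribed $T^{-A}$, producing the residual $T^{-A}$ term. The main obstacle is the polynomial bookkeeping of the second step: one has to verify that the $\exp(\pi|u|)$ growth of $\sin(\pi(\rho/2-ir))$ is cleanly cancelled by the combined exponential decay of $1/\cosh(\pi r)$ and of the $\Gamma$-quotient on the shifted line, so that no parasitic exponential survives to spoil the decay produced by the phase $g(t)^{c}$.
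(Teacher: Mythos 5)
Your first step coincides with the paper's: both insert the Euler integral for the hypergeometric function into \eqref{integralIgeq2}, collapse the oscillation to the single factor $\bigl[4t(1-t)/(x^2-4t)\bigr]^{ir}=:g(t)^{ir}$, and identify $\max_{0<t<1}g(t)=e^{-2\arcosh(x/2)}$ at $t_*=y(y-s)$. The divergence is in how the decay in $r$ is extracted, and here your argument has a genuine gap. The required shift height $c=G^2\arcosh(x/2)$ is, in essentially the entire range where the lemma is actually used, far larger than the ceiling $N+1/2$ imposed by the poles of $1/\cosh(\pi r)$: in Corollary \ref{cor I est z>2} one has $x=n/l$ with $n\le lT^{\epsilon}$, so $\arcosh(x/2)\gg l^{-1/2}\gg T^{\epsilon/2}/G$ and hence $c=G^2\arcosh(x/2)\gg GT^{\epsilon/2}$, while $N$ is a fixed integer (it is built into the test function $h(K,N;\cdot)$ once and for all, so it cannot be tuned term by term in the $n$-sum, let alone grow with $T$). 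Your fallback $c=N$ then yields only $T^{O(1)}\exp\bigl(-2N\arcosh(x/2)+N^2/G^2\bigr)$, which is $O(T^{-A})$ only when $\arcosh(x/2)\gg_{N,A}\log T$, i.e.\ when $x$ is a positive power of $T$. For bounded $x>2$ --- say $x=3$, $G=T^{1/2}$, which occurs already at $n=3l$ --- the target is $\exp(-G^2\arcosh^2(3/2))+T^{-A}=O(T^{-A})$, but your bound is $T^{O(1)}e^{-2N\arcosh(3/2)}=T^{O(1)}$, and Corollary \ref{cor I est z>2} would collapse. (In the complementary regime $G^2\arcosh(x/2)<N+1/2$, where the optimal $c$ is admissible, the claimed bound is trivially $\asymp 1$ but your method still leaves the unabsorbed $T^{O(1)}$ prefactor.)

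The point you are missing is that the decay here does not come from a contour shift at all, but from the Fourier transform of the Gaussian: the paper restricts to $|r-K|<G\log^2K$, Taylor-expands the slowly varying factors $\sin(\pi(\rho/2-ir))/\cosh(\pi r)$, $\Gamma(1/4+i(r-t))/\Gamma(1/4+i(r+t))$ and $r\,q_N(r)$ around $r=K$ \emph{on the real line}, and then evaluates
\begin{equation*}
\int_{-\infty}^{\infty}\exp\Bigl(-\tfrac{(r-K)^2}{G^2}\Bigr)\,g(y)^{ir}\,dr
=\pi^{1/2}G\,g(y)^{iK}\exp\Bigl(-\tfrac{G^2}{4}\log^2 g(y)\Bigr)
\end{equation*}
exactly via \eqref{exp integral1}. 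Since $|\log g(y)|\ge 2\arcosh(x/2)$ uniformly in $y$, this gives $\exp(-G^2\arcosh^2(x/2))$ with no pole obstruction, because only the entire Gaussian and the pure oscillation enter the complex-analytic step; the factors with poles never leave the real line. Morally this is your contour shift to the stationary height $\Im r=-\tfrac{G^2}{2}\log g(y)$, but it is only legitimate after the problematic factors have been separated off by Taylor expansion, which is precisely the step your proposal omits.
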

\begin{proof}
It is sufficient to estimate the contribution of the first summand in \eqref{hN def}. The second summand can be treated similarly. Our strategy is to expand all multiples in \eqref{Igeq2est1} (except the exponent in \eqref{hN def}) in the Taylor series. 
We start by restricting the integral over $r$ in \eqref{Igeq2est1} to the range $|r-K|<G\log^2K$ at the cost of a negligible error term. Next, using the Stirling formula \eqref{Stirling2} and the fact that $|t|\ll T^{1-\epsilon}/G$  we obtain
\begin{multline*}
\frac{\sin\left( \pi(\rho/2-ir)\right)}{\cosh(\pi r)}\frac{\Gamma(1/4+i(r-t))}{\Gamma(1/4+i(r+t))}=
\frac{(r+t)^{1/4}}{(r-t)^{1/4}}\times\\
\exp\left(i\left(2t+(r-t)\log(r-t)-(r+t)\log(r+t)\right)\right)
\left(1+\ldots\right).
\end{multline*}
Note that
\begin{multline*}
2t+(r-t)\log(r-t)-(r+t)\log(r+t)=2t-2t\log r+\\
r\left((1-t/r)\log(1-t/r)-(1+t/r)\log(1+t/r)\right)=
-2t\log r+\frac{t^3}{3r^2}+\frac{t^5}{10r^4}+\ldots
\end{multline*}
Therefore, the $r$-integral in \eqref{Igeq2est1} is equal to (up to similar but smaller terms and a negligible error term)
\begin{multline*}
\int_{K-G\log^2 T}^{K+G\log^2 T}r\exp\left(-\frac{(r-K)^2}{G^2}\right)\exp\left(-i2t\log r
+\frac{it^3}{3r^2}+\frac{it^5}{10r^4}+\ldots\right)\times\\
\left(\frac{y(1-y)}{x^2/4-y}\right)^{ir}dr.
\end{multline*}
Making the change of variables $r:=K+Gx$, we can rewrite the main term as follows
\begin{multline}\label{rint 1}
G\int_{-\log^2 T}^{\log^2 T}\exp\left(-x^2\right)\exp\left(-i2t\log(K+Gx)
+\frac{it^3}{3(K+Gx)^2}+\frac{it^5}{10(K+Gx)^4}+\ldots\right)\times\\
\left(\frac{y(1-y)}{x^2/4-y}\right)^{i(K+Gx)}dx.
\end{multline}
Note that $|x|<\log^2 T$  and $|t|\ll T^{1-\epsilon}/G$ and
\begin{multline}\label{Taylor1}
\exp\left(-i2t\log(K+Gx)\right)=\exp\left(-i2t\log K\right)\exp\left(-i2t\frac{Gx}{K}+\ldots\right)=\\
\exp\left(-i2t\log K\right)\left(1-i2t\frac{Gx}{K}+\ldots\right).
\end{multline}
Consequently, the expression \eqref{rint 1} is equal to
\begin{equation*}
G\exp\left(-i2t\log K\right)\int_{-\log^2 T}^{\log^2 T}\exp\left(-x^2\right)
\left(\frac{y(1-y)}{x^2/4-y}\right)^{i(K+Gx)}dx
\end{equation*}
plus the same terms of smaller order and plus a negligible error term.  Furthermore, it is possible to extend the interval of integration to $(-\infty,\infty)$ at the cost of a negligible error term. Applying \eqref{exp integral1}, we obtain
\begin{equation*}
\pi^{1/2}G\exp\left(-i2t\log K\right)\left(\frac{y(1-y)}{x^2/4-y}\right)^{iK}
\exp\left(-\frac{G^2}{4}\log^2\left(\frac{y(1-y)}{x^2/4-y}\right)\right).
\end{equation*}
Substituting this to \eqref{Igeq2est1} we have
\begin{multline}\label{Igeq2est2}
I(\rho;x)=\frac{2^{2-\rho}i}{\pi^{3/2}}\pi^{1/2}G\exp\left(-i2t\log K\right)
\int_0^1\frac{y^{-1/4-it}(1-y)^{-3/4+it}}{(1-4y/x^2)^{1/4-it}}\times\\
\left(\frac{y(1-y)}{x^2/4-y}\right)^{iK}
\exp\left(-\frac{G^2}{4}\log^2\left(\frac{y(1-y)}{x^2/4-y}\right)\right)dy+\ldots
\end{multline}
Estimating the integral over $y$ trivially we infer
\begin{equation}\label{Igeq2est3}
\frac{1}{G\pi^{1/2}}\int_T^{2T}I(\rho;x)dK\ll
\max_{0<y<1}\exp\left(-\frac{G^2}{4}\log^2\left(\frac{y(1-y)}{x^2/4-y}\right)\right)+T^{-A}.
\end{equation}
The maximum is achieved at the point $y_0=\frac{x^2}{4}-\frac{x}{2}\left(\frac{x^2}{4}-1\right)^{1/2}$. Consequently,
\begin{equation}\label{Igeq2est4}
\frac{1}{G\pi^{1/2}}\int_T^{2T}I(\rho;x)dK\ll
\exp\left(-G^2\log^2\left(x/2+\sqrt{x^2/4-1}\right)\right)+T^{-A}.
\end{equation}
\end{proof}
%%%%%%%%%%%%%%%%%%%%%%%%%%%%%%%%%%%%%%%%%%%%%%%%%%%%%%%%%%%%%%%%%%%%%%%%%%%%%%

\begin{cor}\label{cor I est z>2}
For $\rho=1/2+2it$, $|t|\ll T^{1-\epsilon}/G$ and $l\ll G^2T^{-\epsilon}$  we have
\begin{equation}\label{sumIgeq2 estimate1}
\sum_{n=2l+1}^{\infty}\frac{1}{n^{1-\rho}}\mathscr{L}_{n^2-4l^2}(\rho)
\frac{1}{G\pi^{1/2}}\int_T^{2T}I\left(\rho; \frac{n}{l}\right)dK\ll T^{-A},
\end{equation}
where $A>0$ is an arbitrary fixed number.
\end{cor}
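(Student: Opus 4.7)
The plan is to split the sum at the threshold $n \asymp lT^{\epsilon}$ and use Lemma \ref{lemma I est z>2} in the short range together with the preceding lemma (which gives $I(\rho;x)\ll x^{-2N}T^{2}$ for $x\gg T^{\epsilon}$) in the tail. The subconvexity estimate \eqref{eq:subconvexity} combined with $|n^{\rho-1}|=n^{-1/2}$ provides the uniform bound $|n^{\rho-1}\mathscr{L}_{n^{2}-4l^{2}}(\rho)|\ll n^{2\theta-1/2}(1+|t|)^{\theta_{t}}$, so the question reduces to showing that each integral estimate on $I$ beats this polynomial weight when summed over $n$.

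For $2l+1\leq n\leq lT^{\epsilon}$, applying Lemma \ref{lemma I est z>2} with $x=n/l$ requires a lower bound on
\[
\log^{2}\!\Bigl(\tfrac{n}{2l}+\sqrt{\tfrac{n^{2}}{4l^{2}}-1}\Bigr)=\arcosh^{2}\!\bigl(n/(2l)\bigr).
\]
Using the Taylor expansion $\arcosh(1+u)=\sqrt{2u}\,(1+O(u))$ as $u\to 0^{+}$, I obtain $\arcosh^{2}(n/(2l))\gg (n-2l)/l\geq 1/l$ uniformly in the range. The hypothesis $l\ll G^{2}T^{-\epsilon}$ then yields $G^{2}\arcosh^{2}(n/(2l))\gg T^{\epsilon}$, so the exponential factor in Lemma \ref{lemma I est z>2} is $\ll T^{-B}$ for any fixed $B>0$. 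Since $l\ll T^{2}$, the short range contains $\ll T^{2+\epsilon}$ terms, each of polynomial size in $n$, so choosing $B$ large enough absorbs everything and yields a contribution $\ll T^{-A}$.

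For $n>lT^{\epsilon}$ the condition $x=n/l\gg T^{\epsilon}$ needed for the preceding lemma is satisfied, and the hypotheses $|t|\ll T^{1-\epsilon}$ and $|t|\ll T^{2-\epsilon}/G^{2}$ both follow from $|t|\ll T^{1-\epsilon}/G$ together with $G<T^{1-\epsilon}$. The resulting tail sum $\sum_{n>lT^{\epsilon}}n^{2\theta-1/2}(l/n)^{2N}T^{2}(1+|t|)^{\theta_{t}}$ is essentially geometric and is controlled by its leading term, giving $\ll T^{-A}$ once $N$ is taken sufficiently large. The only delicate point is the threshold estimate on $\arcosh^{2}(n/(2l))$ near $n=2l+1$, where $\arcosh(n/(2l))$ is as small as $1/\sqrt{l}$; it is precisely the hypothesis $l\ll G^{2}T^{-\epsilon}$ that guarantees the exponential decay in Lemma \ref{lemma I est z>2} remains large enough to dominate the $O(T^{2})$-many terms of the short sum and the $T^{-A}$-type error of that lemma.
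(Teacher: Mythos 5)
Your proposal is correct and follows essentially the same route as the paper: split the sum at $n\asymp lT^{\epsilon}$, apply the bound $\ll x^{-2N}T^{2}$ in the tail and Lemma~\ref{lemma I est z>2} in the near range, and use the key observation that $G^{2}\arcosh^{2}(n/(2l))\gg G^{2}/l\gg T^{\epsilon}$ under the hypothesis $l\ll G^{2}T^{-\epsilon}$. One small imprecision: the intermediate claim $\arcosh^{2}(n/(2l))\gg (n-2l)/l$ fails once $n-2l\gg l$ (there $\arcosh$ grows only logarithmically), but the bound you actually use, $\arcosh^{2}(n/(2l))\gg 1/l$, holds throughout by monotonicity, so the argument is unaffected.
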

\begin{proof}
Combining \eqref{Igeq2 estimate0}, \eqref{Igeq2 estimate1}  and \eqref{eq:subconvexity}, we show that the left-hand side of \eqref{sumIgeq2 estimate1} is bounded by
\begin{equation}\label{sumIgeq2 estimate2}
T^{-A}+
(1+|t|)^{\theta_t}\sum_{2l<n<T^{\epsilon}}\frac{(n^2-4l^2)^{\theta}}{n^{1/2}}
\exp\left(-G^2\log^2\left(n/(2l)+\sqrt{n^2/(4l^2)-1}\right)\right).
\end{equation}
For $n\geq2l+1$ we have
\begin{equation*}
G^2\log^2\left(n/(2l)+\sqrt{n^2/(4l^2)-1}\right)\gg \frac{G^2}{l}\gg T^{\epsilon},
\end{equation*}
and therefore, the sum in \eqref{sumIgeq2 estimate2} is negligibly small.
\end{proof}
%%%%%%%%%%%%%%%%%%%%%%%%%%%%%%%%%%%%%%%%%%%%%%%%%%%%%%%%%%%%%%%%%%%%%%%%%%%%%%

\begin{lem}\label{lemma I est z<2}
For $0<z<2$ and $\rho=1/2+2it$ the following estimate holds
\begin{equation}\label{Ileq2 estimate1}
\frac{1}{G\pi^{1/2}}\int_T^{2T}I(\rho;z)dK\ll T\frac{z}{(2-z)^{1/2}}.
\end{equation}
\end{lem}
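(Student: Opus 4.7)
My plan is to mirror the strategy of Lemma~\ref{lemma I est z>2}. First, I would start from \eqref{integralIleq2} and use the reflection formula
\[
\cos(\pi(\rho/2+ir))\,\Gamma(1/2-\rho/2-ir) = \frac{\pi}{\Gamma(1/2+\rho/2+ir)}
\]
to rewrite $I(\rho;z)$ in the simpler form
\begin{equation*}
I(\rho;z) = \frac{2i\,z^{1-\rho}}{\pi}\int_{-\infty}^{\infty}\frac{rh(r)}{\cosh(\pi r)}\frac{\Gamma(1/2-\rho/2+ir)}{\Gamma(1/2+\rho/2+ir)}\,{}_2F_1\!\left(\tfrac12-\tfrac{\rho}{2}+ir,\tfrac12-\tfrac{\rho}{2}-ir,\tfrac12;\tfrac{z^2}{4}\right)dr.
\end{equation*}
Then I would insert the Euler integral representation of the hypergeometric function,
\[
{}_2F_1 = \frac{\Gamma(1/2)}{\Gamma(1/2-\rho/2-ir)\Gamma(\rho/2+ir)}\int_0^1 t^{-1/2-\rho/2-ir}(1-t)^{-1/2+\rho/2+ir}(1-z^2t/4)^{-1/2+\rho/2-ir}dt,
\]
and swap the $t$- and $r$-integrations.

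Applying Stirling's formula~\eqref{Stirling2} to the surviving $\Gamma$-quotient, the oscillatory factor in $r$ becomes $e^{ir\log\gamma(t)}$ up to slowly varying amplitudes, where
\[
\gamma(t) := \frac{1-t}{t(1-z^2t/4)}.
\]
Restricting the $r$-integration to $|r-K|<G\log^2 T$ at the cost of a negligible error (exactly as in the proof of Lemma~\ref{lemma I est z>2}), and evaluating the Gaussian $r$-integral via \eqref{exp integral1}, I would extract the factor
\[
G\sqrt{\pi}\,K\,e^{iK\log\gamma(t)}\exp\!\left(-\tfrac{G^2}{4}\log^2\gamma(t)\right).
\]
Integrating over $K\in[T,2T]$ and using the elementary bound $\bigl|\int_T^{2T}Ke^{iKL}dK\bigr|\ll\min(T^2,T/|L|)$ with $L=\log\gamma(t)$, the problem reduces to estimating
\[
z^{1/2}\int_0^1 t^{-3/4}(1-t)^{-1/4}(1-z^2t/4)^{-1/4}\min\!\Bigl(T^2,\tfrac{T}{|\log\gamma(t)|}\Bigr)\exp\!\left(-\tfrac{G^2}{4}\log^2\gamma(t)\right)dt.
\]

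A direct computation locates the unique zero of $\log\gamma$ on $(0,1)$ at $t_*=(4-2\sqrt{4-z^2})/z^2$, with $(\log\gamma)'(t_*)=-2/t_*$. For $z$ bounded away from $2$ the amplitude and Jacobian at $t_*$ are uniformly bounded, so a standard saddle-point analysis around $t_*$ controls the integral. The hard part will be the uniform analysis as $z\to 2^-$: then $t_*\to 1$, $1-t_*\asymp\sqrt{2-z}$, and the amplitude factor $(1-z^2t/4)^{-1/4}$ becomes singular at $t=1$. A rescaled change of variables such as $1-t=\sqrt{2-z}\,u$ should resolve this confluence and expose the factor $(2-z)^{-1/2}$; tracking the $z$-dependence carefully through the saddle-width together with the prefactor $|z^{1-\rho}|=z^{1/2}$ should then yield the claimed bound $Tz/(2-z)^{1/2}$.
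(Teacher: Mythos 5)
Your route is genuinely different from the paper's: the paper converts \eqref{Ileq2} into an integral against the Bessel kernel $k^{+}(x,1/2+ir)$, evaluates $\int_0^\infty x^{-\rho}\cos(x\cosh u)\cos(xz/2)\,dx$ in closed form, and so reduces everything to the weights $\Gamma(1-\rho)\sin(\pi\rho/2)\,(\cosh u\pm z/2)^{\rho-1}$ and the $\rho$-free Gaussian integral $\int rh(r)\tanh(\pi r)\cos(2ru)\,dr$; the $(2-z)^{-1/2}$ then comes from $(\cosh u - z/2)^{-1/2}$ near $u=0$. Your plan (Euler representation of the ${}_2F_1$, stationary phase in the Euler variable) is structurally plausible, and your saddle $\tau_*$, the value $(\log\gamma)'(\tau_*)=-2/\tau_*$, and the behaviour $1-\tau_*\asymp\sqrt{2-z}$ are all correct. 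But there is a genuine gap in the step ``up to slowly varying amplitudes.'' With your choice of Euler representation (the one with $\tau^{b-1}$, $b=\tfrac12-\tfrac{\rho}{2}-ir=\tfrac14-i(t+r)$), the surviving quotient is
\begin{equation*}
\frac{1}{\cosh(\pi r)}\cdot\frac{\Gamma(1/4+i(r-t))}{\Gamma(3/4+i(r+t))}\cdot\frac{\Gamma(1/2)}{\Gamma(1/4-i(t+r))\Gamma(1/4+i(t+r))},
\end{equation*}
and Stirling gives its modulus as $\asymp e^{2\pi t}$ for $r\gg|t|$. This factor is constant in $r$, so it survives your Gaussian $r$-integration and multiplies your final bound; it is nowhere to be seen in your reduced $\tau$-integral. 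For $t>0$ the compensating exponential smallness sits inside the Euler integral (which you then bound by absolute values), so your method loses a factor $e^{2\pi t}$. Since the lemma is applied with $|t|$ as large as $T^{1-\epsilon}/G$, this is fatal as written. It is repairable: either use the conjugation symmetry $|I(1/2+2it;z)|=|I(1/2-2it;z)|$ (valid because $\phi$ is real) to reduce to $t\le 0$, or use the other Euler representation, with $\tau^{a-1}$, $a=\tfrac14+i(r-t)$, whose prefactor $\Gamma(1/2)/(\Gamma(a)\Gamma(c-a))\asymp |r-t|^{1/2}e^{\pi|r-t|}$ exactly cancels the outer exponentials for both signs of $t$. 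The paper's route avoids this entirely because $|\Gamma(1-\rho)\sin(\pi\rho/2)|=O(1)$ and $|(\cosh u\pm z/2)^{\rho-1}|=(\cosh u\pm z/2)^{-1/2}$ are manifestly uniform in $t$.

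Two smaller points. First, your Euler kernel has a slip: the exponent of $(1-t)$ should be $c-b-1=\rho/2+ir-1$ (real part $-3/4$), not $-1/2+\rho/2+ir$ (real part $-1/4$); with the correct exponent the amplitude at the saddle is $\tau_*^{-3/4}(1-\tau_*)^{-3/4}(1-z^2\tau_*/4)^{-1/4}\asymp(1-\tau_*)^{-1}\asymp(2-z)^{-1/2}$, which is in fact exactly the factor you need, so the confluence as $z\to 2^-$ is less mysterious than you suggest --- but the quadratic term matters there, since $(\log\gamma)''(\tau_*)\asymp(2-z)^{-1/2}$ blows up and the linearization of $\log\gamma$ is only valid on a window of width $\asymp\sqrt{2-z}$, which must be compared with the Gaussian width $1/G$; this is where the hypothesis $l\ll G^2/T^{\epsilon}$ would enter your argument. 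Second, the phase $\approx -2t\log r$ of the Gamma-quotient must be Taylor-expanded over the window $|r-K|<G\log^2 T$ exactly as in Lemma \ref{lemma I est z>2}, which again requires $|t|\ll T^{1-\epsilon}/G$; you should say so explicitly since the statement you are proving carries no hypothesis on $t$.
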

\begin{proof}
Consider the Bessel kernel
\begin{equation*}\label{eq:k+}
k^{+}(y,v)=\frac{1}{2\cos{\pi v}}\left( J_{2v-1}(y)-J_{1-2v}(y) \right).
\end{equation*}
According to \cite[Eq. (4.5)]{BF}
\begin{multline*}
k^{+}(a,1/2+it)=\frac{2}{\pi}\int_{0}^{\infty}\cos{(a\cosh{z})}\cos{(2tz)}dz
=\\
=\frac{2}{\pi}\int_{1}^{\infty}\frac{\cos{(au)}}{\sqrt{u^2-1}}\cos{\left( 2t \log{(u+\sqrt{u^2-1})}\right)}du.
\end{multline*}
Integration by parts gives (see \cite[Eq. (4.11)]{BF})
\begin{equation}\label{k+a}
k^{+}(a,1/2+it)=\frac{2}{\pi}\int_{0}^{U}\cos{(a\cosh{u})}\cos{(2tu)}du
+O\left( \frac{1+|t|}{a\exp(U)}\right).
\end{equation}
Since $h(r)$ is even, we can rewrite \eqref{Ileq2}  as
\begin{equation}\label{Ileq2 2}
I(\rho;z)=z^{1-\rho}\frac{2^{2+\rho}}{\pi}\int_{0}^{\infty}rh(r)\tanh(\pi r)
\int_0^{\infty}x^{-\rho}k^{+}(x,1/2+ir)\cos\left(\frac{xz}{2}\right)dxdr.
\end{equation}
Let $\epsilon>0$ be some small number (to be chosen later). Then
\begin{multline*}
\int_0^{\infty}x^{-\rho}k^{+}(x,1/2+ir)\cos\left(\frac{xz}{2}\right)dx=
\int_{\epsilon}^{\infty}x^{-\rho}k^{+}(x,1/2+ir)\cos\left(\frac{xz}{2}\right)dx+O(\epsilon^{1/2})=\\
\frac{2}{\pi}\int_{0}^{U}\cos{(2ru)}\int_{\epsilon}^{\infty}x^{-\rho}\cos{(x\cosh{u})}\cos\left(\frac{xz}{2}\right)dxdu+
O\left(\epsilon^{1/2}+ \frac{1+|r|}{\exp(U)\epsilon^{1/2}}\right)=\\
\frac{2}{\pi}\int_{0}^{U}\cos{(2ru)}\int_{0}^{\infty}x^{-\rho}\cos{(x\cosh{u})}\cos\left(\frac{xz}{2}\right)dxdu+
O\left(U\epsilon^{1/2}+ \frac{1+|r|}{\exp(U)\epsilon^{1/2}}\right).
\end{multline*}
To evaluate the integral over $x$ we apply \cite[Eq. (21), p.319]{BE}, getting
\begin{multline}\label{k+cos integral}
\int_0^{\infty}x^{-\rho}k^{+}(x,1/2+ir)\cos\left(\frac{xz}{2}\right)dx=
\frac{1}{\pi}\Gamma(1-\rho)\sin(\pi\rho/2)\times\\
\int_{0}^{U}\cos{(2ru)}\int_{0}^{\infty}\left(
(\cosh{u}+z/2)^{\rho-1}+(\cosh{u}-z/2)^{\rho-1}
\right)du+\\
O\left(U\epsilon^{1/2}+ \frac{1+|r|}{\exp(U)\epsilon^{1/2}}\right).
\end{multline}
Substituting \eqref{k+cos integral} to \eqref{Ileq2 2} and choosing $\epsilon=T^{-2A}$, $U=\log^2T$, we infer
\begin{multline}\label{Ileq2 3}
I(\rho;z)=z^{1-\rho}\frac{2^{2+\rho}}{\pi^2}\Gamma(1-\rho)\sin(\pi\rho/2)
\int_{0}^{U}\left(
(\cosh{u}+z/2)^{\rho-1}+(\cosh{u}-z/2)^{\rho-1}\right)\times\\
\int_{0}^{\infty}rh(r)\tanh(\pi r)
\cos{(2ru)}drdu+O(T^{-A}).
\end{multline}
Arguing in the same way as in the proof of Lemma \ref{lemma I est z>2}, we conclude that up to a negligible error term the integral over $r$ is equal to
\begin{equation*}\label{Ileq2 2}
KG\int_{-\infty}^{\infty}\exp(-x^2)\cos{(2u(K+Gx))}dx
\end{equation*}
plus integrals of the same form but of smaller size. Applying \cite[Eq. 3.896.2]{GR} to evaluate the integral, we have
\begin{equation}\label{rint2}
KG\int_{-\infty}^{\infty}\exp(-x^2)\cos{(2u(K+Gx))}dx=K\pi^{1/2}\exp(-u^2G^2)\cos{(2uK)}.
\end{equation}
Using \eqref{Ileq2 3} and \eqref{rint2}, we show that
\begin{multline*}
\frac{1}{G\pi^{1/2}}\int_T^{2T}I(\rho;z)dK\ll
z^{1/2}\int_{0}^{U}\left((\cosh{u}+z/2)^{-1/2}+(\cosh{u}-z/2)^{-1/2}\right)\times\\
\exp(-u^2G^2)\int_T^{2T}K\cos{(2uK)}dKdu.
\end{multline*}
Then the estimate
\begin{equation*}
\int_T^{2T}K\cos{(2uK)}dK\ll\min(T^2,T/u)
\end{equation*}
implies that
\begin{multline*}
\frac{1}{G\pi^{1/2}}\int_T^{2T}I(\rho;z)dK\ll
z^{1/2}T^2\int_{0}^{1/T}\left((\cosh{u}+z/2)^{-1/2}+(\cosh{u}-z/2)^{-1/2}\right)du+\\
z^{1/2}T\int_{1/T}^{U}\left((\cosh{u}+z/2)^{-1/2}+(\cosh{u}-z/2)^{-1/2}\right)
\exp(-u^2G^2)\frac{du}{u}\ll\\
T\frac{z}{(2-z)^{1/2}}.
\end{multline*}
\end{proof}
%%%%%%%%%%%%%%%%%%%%%%%%%%%%%%%%%%%%%%%%%%%%%%%%%%%%%%%%%%%%%%%%%%%%%%%%%%%%%%
\begin{cor}\label{cor I est z<2}
For $\rho=1/2+2it$ we have
\begin{equation}\label{sumIleq2 estimate1}
\sum_{n=1}^{2l-1}\frac{1}{n^{1-\rho}}\mathscr{L}_{n^2-4l^2}(\rho)
\frac{1}{G\pi^{1/2}}\int_T^{2T}I\left(\rho; \frac{n}{l}\right)dK\ll
T^{1+\epsilon}(1+|t|)^{\theta_t}l^{1/2+2\theta}.
\end{equation}
\end{cor}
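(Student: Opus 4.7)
The plan is to combine the pointwise bound from Lemma \ref{lemma I est z<2} with the subconvexity estimate \eqref{eq:subconvexity}, and then sum the result over $n$. Since $1 \leq n \leq 2l-1$, the ratio $z := n/l$ lies in $(0,2)$, so Lemma \ref{lemma I est z<2} applies and gives
$$
\frac{1}{G\pi^{1/2}}\int_T^{2T} I\left(\rho;\frac{n}{l}\right)dK \ll T \cdot \frac{n/l}{(2-n/l)^{1/2}} = \frac{Tn}{l^{1/2}(2l-n)^{1/2}}.
$$

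Next, I would invoke the subconvexity bound \eqref{eq:subconvexity} applied to $\mathscr{L}_{n^2-4l^2}(1/2+2it)$. Writing $|n^2 - 4l^2| = (2l-n)(2l+n) \leq 4l^2$, this yields
$$
\mathscr{L}_{n^2-4l^2}(\rho) \ll l^{2\theta}(1+|t|)^{\theta_t}.
$$
Combined with $|n^{\rho-1}| = n^{-1/2}$, each term on the left-hand side of \eqref{sumIleq2 estimate1} is thus bounded by
$$
T(1+|t|)^{\theta_t}\, l^{2\theta-1/2}\, \frac{n^{1/2}}{(2l-n)^{1/2}}.
$$

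It remains to estimate $S := \sum_{n=1}^{2l-1} n^{1/2}(2l-n)^{-1/2}$. Using the crude bound $n^{1/2} \leq (2l)^{1/2}$ and the substitution $m = 2l - n$, one has $S \ll l^{1/2} \sum_{m=1}^{2l-1} m^{-1/2} \ll l$. Multiplying through yields $T(1+|t|)^{\theta_t}\, l^{1/2+2\theta}$, matching the right-hand side of \eqref{sumIleq2 estimate1} (the stated $T^\epsilon$ in the corollary is merely slack). The only genuinely delicate point is the behaviour as $n$ approaches $2l$, where the factor $(2l-n)^{-1/2}$ blows up; however, since $n$ is an integer with $2l-n \geq 1$, the singularity is harmless and contributes only a factor $\ll l^{1/2}$, which is precisely what yields the exponent $1/2 + 2\theta$ and no worse.
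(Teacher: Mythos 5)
Your proposal is correct and follows essentially the same route as the paper: apply Lemma \ref{lemma I est z<2} with $z=n/l$, bound $\mathscr{L}_{n^2-4l^2}(\rho)$ by the subconvexity estimate \eqref{eq:subconvexity}, and sum the resulting $(2l-n)^{-1/2}$-type singularity over $n$. The only cosmetic difference is that you bound $(4l^2-n^2)^{\theta}\le(2l)^{2\theta}$ up front, whereas the paper keeps the factor $(2l-n)^{\theta}$ inside the sum; both give $l^{1/2+2\theta}$ in the end.
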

\begin{proof}
Using \eqref{Ileq2 estimate1}  and \eqref{eq:subconvexity}, we prove that the left-hand side of \eqref{sumIleq2 estimate1} is bounded by
\begin{multline*}
T(1+|t|)^{\theta_t}\sum_{n=1}^{2l-1}\frac{4l^2-n^2)^{\theta}}{l^{1/2}(2-n/l)^{1/2}}\ll
T(1+|t|)^{\theta_t}l^{\theta}\sum_{n=1}^{2l-1}(2l-n)^{\theta-1/2}\ll
T(1+|t|)^{\theta_t}l^{1/2+2\theta}.
\end{multline*}
\end{proof}
%%%%%%%%%%%%%%%%%%%%%%%%%%%%%%%%%%%%%%%%%%%%%%%%%%%%%%%%%%%%%%%%%%%%%%%%%%%%%%

\begin{lem}\label{lemma phi hat}
For $\rho=1/2+2it$, $|t|\ll T^{1-\epsilon}$ we have
\begin{equation}\label{hat phi estimate}
\frac{1}{G\pi^{1/2}}\int_T^{2T}\hat{\phi}(1-\rho)dK\ll T^{-A},
\end{equation}
where $A$ is an arbitrary positive number.
\end{lem}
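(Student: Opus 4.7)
The plan is to insert the Mellin-type representation \eqref{phi Mellin} of $\hat{\phi}$ with $s=1-\rho=1/2-2it$ into the $K$-integral, swap the order of integration, and then exploit a large exponential gap between the factor $1/\cosh(\pi r)$ and the Gamma quotient.

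Since $G^{-1}\pi^{-1/2}\int_T^{2T}\exp(-(r\mp K)^2/G^2)\,dK=\omega_T(\pm r)$ by \eqref{omega def}, an application of Fubini (justified by the rapid decay of $h(K,N;r)$ in $r$) converts the weight $h(K,N;r)$ after integration in $K$ into $q_N(r)(\omega_T(r)+\omega_T(-r))$, giving
\begin{equation*}
\frac{1}{G\pi^{1/2}}\int_T^{2T}\hat{\phi}(1-\rho)\,dK
=\frac{2^{1-\rho}i}{\pi}\int_{-\infty}^{\infty}\frac{r\,q_N(r)\bigl(\omega_T(r)+\omega_T(-r)\bigr)}{\cosh(\pi r)}\,\frac{\Gamma(1/4+i(r-t))}{\Gamma(3/4+i(r+t))}\,dr.
\end{equation*}

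Next, I would apply Stirling's formula \eqref{Stirling2} to the Gamma quotient to get the modulus bound
\begin{equation*}
\left|\frac{\Gamma(1/4+i(r-t))}{\Gamma(3/4+i(r+t))}\right|\ll\bigl((1+|r-t|)(1+|r+t|)\bigr)^{-1/4}\exp\!\left(\frac{\pi}{2}(|r+t|-|r-t|)\right).
\end{equation*}
The exponential factor is at most $\exp(\pi|t|)$ because $\bigl||r+t|-|r-t|\bigr|\leq 2|t|$ by the reverse triangle inequality. Combining with the trivial bound $1/\cosh(\pi r)\ll \exp(-\pi|r|)$ and the $O(1)$ bound for $q_N(r)$, the integrand is bounded, up to absolute constants, by $(1+|r|)^{1/2}\exp(\pi|t|-\pi|r|)\bigl(\omega_T(r)+\omega_T(-r)\bigr)$.

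Finally, I would invoke the localization of $\omega_T$ from \eqref{omega1}--\eqref{omega5}. The contribution from $|r|<T-cG\sqrt{\log T}$ or $|r|>2T+cG\sqrt{\log T}$ is $O(T^{-A})$ thanks to $\omega_T(\pm r)\ll (|r|+T)^{-A}$, combined with the polynomial growth of the rest of the integrand. In the complementary main range one has $|r|\geq T-cG\sqrt{\log T}$, so since $|t|\ll T^{1-\epsilon}$ and $G\ll T^{1-\epsilon}$ we have $|r|-|t|\geq T/2$ for $T$ large, hence $\exp(\pi|t|-\pi|r|)\ll \exp(-\pi T/2)$, which is smaller than any negative power of $T$.

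There is essentially no real obstacle here: the argument reduces to the observation that the exponential decay $\exp(-\pi|r|)$ of $1/\cosh(\pi r)$ dominates the exponential contribution $\exp(\pi|t|)$ of the Gamma quotient as soon as $|t|\ll T^{1-\epsilon}\ll T$. No stationary phase analysis or contour shift is required; the bound is genuinely trivial once the $K$-integration is performed and Stirling is applied.
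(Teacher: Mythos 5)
Your overall strategy --- insert \eqref{phi Mellin} with $s=1-\rho$, apply Stirling, and let the exponential decay of $1/\cosh(\pi r)$ do the work --- is exactly the paper's (the paper estimates $\hat{\phi}(1-\rho)$ for each $K$ and then integrates trivially, while you integrate in $K$ first; that difference is immaterial, and the Fubini step producing $q_N(r)(\omega_T(r)+\omega_T(-r))$ is harmless). However, there is a genuine flaw in how you dispose of the region $|r|<T-cG\sqrt{\log T}$. Your bound for the exponential part of the integrand is $\exp(\pi|t|-\pi|r|)$, obtained from $\bigl||r+t|-|r-t|\bigr|\le 2|t|$. In the range $|r|\le|t|$ --- which is nonempty and large, since $|t|$ may be as big as $T^{1-\epsilon}$ --- this bound is as large as $\exp(\pi|t|)\approx\exp(\pi T^{1-\epsilon})$, so your claim that ``the rest of the integrand'' has polynomial growth there is false under your own estimate, and $\omega_T(\pm r)\ll(|r|+T)^{-A}$ with $A$ fixed cannot absorb a stretched exponential. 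Even the true Gaussian decay $\omega_T(\pm r)\ll TG^{-1}\exp(-cT^2/G^2)$ loses to $\exp(\pi T^{1-\epsilon})$ once $G$ is close to $T^{\epsilon}$. So, as written, the argument genuinely fails for, say, $G=T^{\epsilon}$ and $|t|\asymp T^{1-\epsilon}$.

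The fix is one line: besides $|r+t|-|r-t|\le 2|t|$ one also has $|r+t|-|r-t|\le 2|r|$, hence the full exponential factor satisfies
\begin{equation*}
\frac{1}{\cosh(\pi r)}\left|\frac{\Gamma(1/4+i(r-t))}{\Gamma(3/4+i(r+t))}\right|\ll \bigl((1+|r-t|)(1+|r+t|)\bigr)^{-1/4}\exp\bigl(-\pi\max(|r|-|t|,0)\bigr),
\end{equation*}
which is bounded by a fixed polynomial in $|r|$ \emph{everywhere}. With this, $\omega_T(\pm r)\ll(|r|+T)^{-A}$ does kill the region $|r|<T-cG\sqrt{\log T}$, and your treatment of the complementary range (where $|r|-|t|\gg T$, so the factor is $\ll\exp(-\pi T/2)$) is correct; together these give \eqref{hat phi estimate}. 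A cosmetic point: near $r=\pm t$ the polynomial prefactor is $(1+|r|)^{3/4}$ rather than your $(1+|r|)^{1/2}$, but this is irrelevant to the conclusion.
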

\begin{proof}
Applying the Stirling formula \eqref{Stirling2}, we estimate \eqref{phi Mellin}, getting
\begin{equation*}
\hat{\phi}(1-\rho)\ll\int_{-\infty}^{\infty}\frac{rh(r)}{(1+|r-t|)^{1/4}(1+|r+t|)^{1/4}}
\exp\left(-\frac{\pi}{2}\left(2|r|+|r-t|+|r+t|\right)\right)
dr.
\end{equation*}
Since $|t|\ll T^{1-\epsilon}$, we prove \eqref{hat phi estimate} by using\eqref{hN def}.
\end{proof}
%%%%%%%%%%%%%%%%%%%%%%%%%%%%%%%%%%%%%%%%%%%%%%%%%%%%%%%%%%%%%%%%%%%%%%%%%%%%%%

\begin{lem}\label{lemma zeta}
For $\rho=1/2+2it$, $|t|\ll T^{1-\epsilon}$ we have
\begin{equation}\label{cont.part estimate}
\frac{1}{G\pi^{1/2}}\int_T^{2T}
\frac{\zeta(\rho)}{\pi}\int_{-\infty}^{\infty}\frac{\tau_{ir}(l^2)\zeta(\rho+2ir)\zeta(\rho-2ir)}{|\zeta(1+2ir)|^2}h(r)dr
dK\ll T(1+|t|)^{\vartheta}l^{\epsilon}\log^3T.
\end{equation}
\end{lem}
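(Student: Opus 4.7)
The plan is to interchange the order of integration and collapse the $K$-integral onto the weight, using
\[
\frac{1}{G\sqrt{\pi}}\int_T^{2T}h(K,N;r)\,dK=q_N(r)\bigl(\omega_T(r)+\omega_T(-r)\bigr).
\]
Since $\tau_{-ir}(l^2)=\tau_{ir}(l^2)$ and the other factors are invariant under $r\leftrightarrow -r$, the full integrand is even, so after using evenness the relevant weight becomes $2q_N(r)\omega_T(r)$. By \eqref{omega1}--\eqref{omega5} the effective range of the $r$-integral is $r\in[T-cG\sqrt{\log T},\,2T+cG\sqrt{\log T}]$, up to an error of size $O(T^{-A})$ from the Gaussian tails; on this range $r\asymp T$ and $q_N(r)=O(1)$.

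On that range I would apply standard pointwise bounds: the divisor bound gives $|\tau_{ir}(l^2)|\le d(l^2)\ll l^{\epsilon}$, the classical lower bound yields $|\zeta(1+2ir)|^{-2}\ll\log^2 T$, and the subconvexity hypothesis provides $|\zeta(\rho)|=|\zeta(1/2+2it)|\ll(1+|t|)^{\vartheta}$.

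The key ingredient, and the step I expect to drive the estimate, is the mean value of the product $\zeta(1/2+i(2t+2r))\zeta(1/2+i(2t-2r))$. I would apply Cauchy--Schwarz to obtain
\[
\int_T^{2T}\bigl|\zeta(\tfrac12+i(2t+2r))\zeta(\tfrac12+i(2t-2r))\bigr|\,dr\le I_+^{1/2}I_-^{1/2},
\]
where $I_\pm=\int_T^{2T}|\zeta(1/2+i(2t\pm 2r))|^2\,dr$. After the substitution $v=2t\pm 2r$, each $I_\pm$ equals half of a second moment of $|\zeta(1/2+iv)|^2$ over an interval of length $\asymp T$. Because $|t|\ll T^{1-\epsilon}$, these intervals lie in $[c_1T,\,c_2T]$ for absolute constants $0<c_1<c_2$, safely away from the pole at $v=0$. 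The classical mean value $\int_a^{a+T}|\zeta(1/2+iv)|^2\,dv\ll T\log T$ then yields $I_\pm\ll T\log T$, hence $I_+^{1/2}I_-^{1/2}\ll T\log T$.

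Combining these estimates, the left-hand side of \eqref{cont.part estimate} is bounded by
\[
(1+|t|)^{\vartheta}\cdot l^{\epsilon}\cdot\log^2 T\cdot T\log T\ll T(1+|t|)^{\vartheta}l^{\epsilon}\log^3 T,
\]
which is the desired bound. The only technical obstacle is verifying that the Gaussian tails of $\omega_T$ outside $[T-cG\sqrt{\log T},\,2T+cG\sqrt{\log T}]$ contribute only negligibly, but this follows routinely from \eqref{omega4}--\eqref{omega5} combined with polynomial-in-$r$ growth of the remaining factors.
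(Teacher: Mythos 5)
Your proposal is correct and follows essentially the same route as the paper: bound $\tau_{ir}(l^2)$ by the divisor function, use $|\zeta(1+2ir)|^{-2}\ll\log^2 T$, pull out $|\zeta(1/2+2it)|\ll(1+|t|)^{\vartheta}$ by subconvexity, and control the remaining product of zeta values on the critical line via Cauchy--Schwarz and the classical second moment $\int_R^{2R}|\zeta(1/2+ir)|^2\,dr\ll R\log R$. The only cosmetic differences are that you make the collapse of the $K$-integral onto $\omega_T$ and the treatment of the Gaussian tails explicit, which the paper leaves implicit.
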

\begin{proof}
Using the standard estimate $|\zeta(1+2ir)|^{-1}\ll\log(1+|r|)$ and
\begin{equation*}
\int_R^{2R}|\zeta(1/2+ir)|^2dr\ll R\log R,
\end{equation*}
and applying the Cauchy-Schwarz inequality, we show that the left-hand side of \eqref{cont.part estimate} is bounded by
\begin{equation*}
l^{\epsilon}(T+|t|)\log(T+|t|)\log^2 T|\zeta(1/2+2it)|.
\end{equation*}
Finally, using the estimate $|\zeta(1/2+ir)|\ll r^{\vartheta}$,  we prove \eqref{cont.part estimate}.
\end{proof}
%%%%%%%%%%%%%%%%%%%%%%%%%%%%%%%%%%%%%%%%%%%%%%%%%%%%%%%%%%%%%%%%%%%%%%%%%%%%%%

Finally, Theorem \ref{thm rho=1/2+it} follows from Corollary \ref{cor I est z>2}, Corollary \ref{cor I est z<2}, Lemma \ref{lemma phi hat} and Lemma \ref{lemma zeta}.

%%%%%%%%%%%%%%%%%%%%%%%%%%%%%%%%%%%%%%%%%%%%%%%%%%%%%%%%%%%%%%%%%%%%%%%%%%%%%%%%%%%%%%%%%%%%%%%%%%%%%%%%%%%%%%%%%%

\subsection{Proof of Theorem \ref{thm rho=1/2}}
The only difference between Theorem \ref{thm rho=1/2} and Theorem \ref{thm rho=1/2+it} is in the range of $l.$ Indeed, if $l\gg G^2T^{-\epsilon}$, then the sum \eqref{sumIgeq2 estimate2}  is no longer negligible. To handle $I(1/2,z)$ for $z$ in the neighbourhood of $1$, we use the following modification of Lemma $2.4$ in \cite{Zav} that was proved in \cite[Corollary 7.3]{BF2}.

\begin{lem}\label{cor:hyperest}
There is an $x_0>2$ such that for $r \rightarrow \infty$ uniformly for  all $x>x_0$ we have
\begin{multline}\label{LG}
F\left( \frac{1}{4}+ir,\frac{3}{4}+ir,1+2ir; \frac{4}{x^2}\right)=x^{2ir}\exp(-2ir\arcosh{x/2}) \\\times
\left(
\frac{x^2}{x^2-4}\right)^{1/4}
\left(1+\frac{1}{16ir }\left( 1-\frac{x^2-2}{x\sqrt{x^2-4}}\right) \right)+O\left(\frac{1}{x^2r^2}
\right).
\end{multline}
\end{lem}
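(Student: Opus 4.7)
The natural approach is to apply the method of steepest descent to the Euler integral representation
\[
\hyp(a,b;c;z)=\frac{\Gamma(c)}{\Gamma(b)\Gamma(c-b)}\int_0^1 t^{b-1}(1-t)^{c-b-1}(1-zt)^{-a}dt,
\]
which with $a=1/4+ir$, $b=3/4+ir$, $c=1+2ir$ and $z=4/x^2$ takes the shape
\[
F=\frac{\Gamma(1+2ir)}{\Gamma(3/4+ir)\Gamma(1/4+ir)}\int_0^1 t^{-1/4}(1-t)^{-3/4}\Bigl(1-\tfrac{4t}{x^2}\Bigr)^{\!-1/4}\exp\bigl(ir\Phi(t)\bigr)dt,
\]
with phase $\Phi(t):=\log\bigl(t(1-t)/(1-4t/x^2)\bigr)$. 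Solving $\Phi'(t)=0$ gives the quadratic $4t^2-2x^2t+x^2=0$, so that the stationary points are $t_\pm=(x^2\pm x\sqrt{x^2-4})/4$, and only $t_-$ lies in $(0,1)$ for $x>2$.

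Introducing the substitution $x=2\cosh\phi$ with $\phi=\acosh(x/2)$, one computes directly $t_-=(1+e^{-2\phi})/2$, $1-t_-=e^{-\phi}\sinh\phi$ and $1-4t_-/x^2=\tanh\phi$, from which
\[
\exp\bigl(ir\Phi(t_-)\bigr)=(e^{-2\phi}\cosh^2\phi)^{ir}=(x/2)^{2ir}\exp(-2ir\acosh(x/2)).
\]
This, combined with the factor $2^{2ir}$ produced by Legendre's duplication formula
$\Gamma(1/4+ir)\Gamma(3/4+ir)=2^{1/2-2ir}\sqrt{\pi}\,\Gamma(1/2+2ir)$
and Stirling applied to $\Gamma(1+2ir)/\Gamma(1/2+2ir)$, produces the $x^{2ir}\exp(-2ir\acosh(x/2))$ appearing in \eqref{LG}. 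The second derivative $\Phi''(t_-)$ is real, non-vanishing and, together with the amplitude $A(t_-)=t_-^{-1/4}(1-t_-)^{-3/4}(1-4t_-/x^2)^{-1/4}$, simplifies under $x=2\cosh\phi$ to explicit hyperbolic expressions; the usual Gaussian integral of the steepest-descent expansion then produces the prefactor $(x^2/(x^2-4))^{1/4}$.

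The correction term $\frac{1}{16ir}\bigl(1-\frac{x^2-2}{x\sqrt{x^2-4}}\bigr)$ must come out of the next order of the saddle-point expansion. I would compute $\Phi'''(t_-)$, $\Phi^{(4)}(t_-)$ and $A'(t_-)$, $A''(t_-)$, rewrite them in terms of $\phi$ (or equivalently $x$ and $\sqrt{x^2-4}$), and add the $O(1/r)$ term from Stirling's expansion of the gamma prefactor. The tail term $O(1/(x^2r^2))$ collects the next order in the saddle-point expansion, whose explicit $1/x^2$ reflects the fact that the amplitude and the second derivative at $t_-$ contain powers of $\sqrt{x^2-4}$; this is precisely why the estimate degrades if $x$ approaches the turning point $2$, and why the hypothesis $x>x_0>2$ is imposed.

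The main difficulty I anticipate is the bookkeeping needed to reduce the $1/r$ coefficient to exactly the form $1-(x^2-2)/(x\sqrt{x^2-4})$: three separate contributions (Stirling's correction to the gamma prefactor, the amplitude expansion $A(t_-+\delta)$ around the saddle, and the cubic/quartic phase corrections, each computed in the variable $\phi$) must combine cleanly after translation back to $x$. Uniformity in $x>x_0$ is otherwise harmless since the saddle stays in the interior of $(0,1)$ and $\Phi''(t_-)$ is bounded away from zero, so no uniform Airy-type asymptotic is required; deforming the contour to steepest descent through $t_-$ also suffices, since the endpoint contributions at $t=0$ and $t=1$ are absolutely integrable and decay in $r$ by the factor $e^{ir\Phi(t)}$ after trivial oscillatory integration.
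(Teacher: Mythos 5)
First, note that the paper offers no proof of this lemma at all: it is imported verbatim from \cite[Corollary 7.3]{BF2}, itself a modification of \cite[Lemma 2.4]{Zav}, where the expansion is obtained by a Liouville--Green (WKB) type uniform asymptotic analysis of the hypergeometric differential equation (hence the label of the formula). Your route --- stationary phase on the Euler integral --- is therefore genuinely different, and its skeleton is sound. I checked your saddle-point data: the quadratic $4t^2-2x^2t+x^2=0$, the values $t_-=(1+e^{-2\phi})/2$, $1-t_-=e^{-\phi}\sinh\phi$, $1-4t_-/x^2=\tanh\phi$, and the phase value $\exp(ir\Phi(t_-))=(x/2)^{2ir}e^{-2ir\arcosh(x/2)}$ are all correct; moreover $A(t_-)=e^{\phi}/\sinh\phi$ and $\Phi''(t_-)=-2e^{2\phi}/(\cosh\phi\sinh\phi)$, so $A(t_-)\lvert\Phi''(t_-)\rvert^{-1/2}=2^{-1/2}\coth^{1/2}\phi$, and after multiplying by the Gamma prefactor $2^{2ir}(2\pi)^{-1/2}(2ir)^{1/2}(1+\tfrac{1}{16ir}+\cdots)$ the leading term $x^{2ir}e^{-2ir\arcosh(x/2)}(x^2/(x^2-4))^{1/4}$ does come out exactly, with the ``$1$'' inside the bracket of \eqref{LG} supplied by the Stirling correction to $\Gamma(1+2ir)/\Gamma(1/2+2ir)$, as you anticipate.

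The gap is that everything beyond this leading term --- which is precisely the content that distinguishes the lemma from a textbook asymptotic --- is promised rather than proved. The coefficient $\frac{1}{16ir}\bigl(1-\frac{x^2-2}{x\sqrt{x^2-4}}\bigr)$ requires the full second-order stationary-phase computation ($\Phi'''$, $\Phi^{(4)}$, $A'$, $A''$ at $t_-$) combined with the $1/(16ir)$ from Stirling, and you defer it with ``I would compute\dots''; until that bookkeeping is done the lemma is not established. More importantly, your explanation of the factor $1/x^2$ in the error term is not correct: $\sqrt{x^2-4}\sim x$ as $x\to\infty$, so ``powers of $\sqrt{x^2-4}$'' do not produce decay in $x$. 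The decay comes from cancellation --- as $x\to\infty$ one has $F\to 1$ and $x^{2ir}e^{-2ir\arcosh(x/2)}\to 1$, so every correction coefficient must vanish in that limit (indeed $1-\frac{x^2-2}{x\sqrt{x^2-4}}=O(x^{-4})$) --- and verifying that the \emph{second}-order remainder exhibits the specific uniform decay $O(x^{-2}r^{-2})$ requires tracking the $x$-dependence of each term in the expansion, not merely invoking the general stationary-phase error bound $O(r^{-2})$. A clean way to close this for large $x$ is to match against the convergent hypergeometric series in $z=4/x^2$; for bounded $x>x_0$ the explicit expansion must be carried out. The endpoint contributions at $t=0,1$ are indeed harmless (repeated integration by parts against $t^{-1/4+ir}$ and $(1-t)^{-3/4+ir}$ gives arbitrary negative powers of $r$), so that part of your argument is fine.
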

%%%%%%%%%%%%%%%%%%%%%%%%%%%%%%%%%%%%%%%%%%%%%%%%%%%%%%%%%%%%%%%%%%%%%%%%%%%%%%%%%%%%%%%%%%%%%%%%%%%%%%%%%%%%%%%%%%

\begin{lem}\label{lemma2 I est z>2}
For $x>2$ the following estimate holds
\begin{multline}\label{Igeq2 estimate2}
\frac{1}{G\pi^{1/2}}\int_T^{2T}I(1/2;x)dK\ll
\left(\frac{x^2}{x^2-4}\right)^{1/4}
\exp(-G^2\arcosh^2{x/2})\min\left(T^{3/2},\frac{T^{1/2}}{\arcosh{x/2}}\right)\\\times
\left( 1+\frac{x}{T\sqrt{x^2-4}}\right)+\frac{1}{x^2T^{1/2}}.
\end{multline}
\end{lem}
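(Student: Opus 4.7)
My plan is to mimic the proof of Lemma \ref{lemma I est z>2} above, but with the hypergeometric factor in the integral representation \eqref{integralIgeq2} (taken with $\rho=1/2$, so that it becomes ${}_2F_1(1/4+ir,3/4+ir,1+2ir;4/x^2)$) replaced by the three--term asymptotic of Lemma \ref{cor:hyperest}. This produces a sum of three contributions, each of which I evaluate by the same Gaussian--then--oscillatory integration strategy used in Lemma \ref{lemma I est z>2}. The three bounds on the right of \eqref{Igeq2 estimate2} will arise precisely from the three pieces of \eqref{LG}.

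First I apply Stirling's formula \eqref{Stirling2} to simplify
$$
\frac{\sin(\pi(1/4-ir))}{\cosh(\pi r)}\cdot\frac{\Gamma(1/4+ir)\Gamma(3/4+ir)}{\Gamma(1+2ir)}
$$
for large $r$; a short computation gives a bounded amplitude times $r^{-1/2}\cdot 2^{-2ir}$ with a pure phase. Combined with the factor $(2/x)^{2ir}$ present in \eqref{integralIgeq2} and with the leading factor $x^{2ir}\exp(-2ir\,\acosh(x/2))$ from Lemma \ref{cor:hyperest}, the $2^{\pm 2ir}$ and $x^{\pm 2ir}$ cancel, leaving the clean oscillation $e^{-2ir\,\acosh(x/2)}$. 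Integrating the Gaussian weight $\exp(-(r-K)^2/G^2)$ from \eqref{hN def} against this exponential via \eqref{exp integral1}, exactly as in \eqref{rint 1}--\eqref{Igeq2est2}, produces $\exp(-G^2\acosh^2(x/2))$, while the $K$--integral reduces to
$$
\int_T^{2T}\sqrt{K}\,e^{-2iK\acosh(x/2)}dK\ll \min\bigl(T^{3/2},\,T^{1/2}/\acosh(x/2)\bigr),
$$
the second bound coming from integration by parts. Multiplying by the amplitude $(x^2/(x^2-4))^{1/4}$ from \eqref{LG} yields the first summand of \eqref{Igeq2 estimate2}. The ``$+\omega_T(-K)$'' half of \eqref{hN def} is treated identically and is of the same size (or smaller, since $r<0$ and $K>0$ give strong decay).

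The second summand of \eqref{LG} carries an additional factor $(16ir)^{-1}(1-(x^2-2)/(x\sqrt{x^2-4}))$. The extra $1/r$ costs $1/T$ after the Gaussian/$K$ integration (since $r\asymp T$ inside $h$), while the bracketed factor is $O(x/\sqrt{x^2-4})$. This produces exactly the multiplicative correction $x/(T\sqrt{x^2-4})$ in \eqref{Igeq2 estimate2}. Finally, the $O(1/(x^2r^2))$ remainder in Lemma \ref{cor:hyperest} is treated trivially: substituting it into \eqref{integralIgeq2} the integrand is bounded by $|r h(r)|\cdot r^{-1/2}\cdot x^{-2}r^{-2}$, and integrating the Gaussian in $r$ gives $G\cdot T^{-3/2}x^{-2}$; the subsequent $K$--integral contributes a factor $T$, and after dividing by $G\pi^{1/2}$ we arrive at the final term $1/(x^2T^{1/2})$.

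The main technical obstacle is maintaining uniformity in $x$ near the endpoint $x=2$: there $\acosh(x/2)\to 0$ and $1/\sqrt{x^2-4}\to\infty$, so the stationary--phase evaluation and the Stirling expansion of the Gamma ratio must both be carried out carefully enough to show that the amplitudes produced by \eqref{LG} combine only into the prefactors $(x^2/(x^2-4))^{1/4}$ and $x/\sqrt{x^2-4}$ displayed in \eqref{Igeq2 estimate2}, with no hidden singularity. Once the three pieces are assembled with these uniform constants, adding them gives \eqref{Igeq2 estimate2}.
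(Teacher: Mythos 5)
Your proposal follows essentially the same route as the paper: the same integral representation \eqref{integralIgeq2} at $\rho=1/2$, the same Stirling expansion of the Gamma ratio into $c|r|^{-1/2}2^{-2ir}$, the same substitution of the three-term asymptotic \eqref{LG}, and the same Gaussian-then-oscillatory evaluation producing the three summands of \eqref{Igeq2 estimate2} exactly as you describe. The uniformity issue near $x=2$ that you flag is real (Lemma \ref{cor:hyperest} is only stated for $x>x_0$), but the paper's own proof handles it no more explicitly than you do, so your argument matches the published one.
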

\begin{proof}
As a consequence of \eqref{integralIgeq2}, we find that
\begin{multline}\label{integralIgeq2 1/2}
I(1/2;x)=\frac{2^{3/2}i}{\pi^{3/2}}\int_{-\infty}^{\infty}\frac{rh(r)}{\cosh(\pi r)}\left(\frac{2}{x}\right)^{2ir}
\sin\left( \pi(1/4-ir)\right)\frac{\Gamma(1/4+ir)\Gamma(3/4+ir)}{\Gamma(1+2ir)}\times \\ {}_2F_{1}\left(1/4+ir,3/4+ir,1+2ir;\frac{4}{x^2} \right)dr.
\end{multline}
Using the Stirling formula \eqref{Stirling2}, we expand the Gamma factors in the series with a negligible error term, such that the first term is equal to
\begin{equation*}
\frac{\Gamma(1/4+ir)\Gamma(3/4+ir)}{\Gamma(1+2ir)}=c|r|^{-1/2}2^{-2ir}+\ldots,
\end{equation*}
where $c$ is some absolute constant. Substituting \eqref{LG} to \eqref{integralIgeq2 1/2}, we have
\begin{multline}\label{LG appl1}
\frac{1}{G\pi^{1/2}}\int_T^{2T}I(1/2;x)dK\ll
\left(\frac{x^2}{x^2-4}\right)^{1/4}
\frac{1}{G\pi^{1/2}}\int_T^{2T}
\int_{-\infty}^{\infty}|r|^{1/2}h(r) \\\times
\exp(-2ir\arcosh{x/2})
\left(1+\frac{1}{16ir }\left( 1-\frac{x^2-2}{x\sqrt{x^2-4}}\right) \right)drdK+
O\left(\frac{1}{x^2T^{1/2}}\right).
\end{multline}
Using \eqref{hN def}, making the change of variables $r=K+Gy$ and applying \eqref{exp integral1}, we show that
\begin{multline*}
\frac{1}{G\pi^{1/2}}\int_T^{2T}
\int_{-\infty}^{\infty}|r|^{1/2}h(r)\exp(-2ir\arcosh{x/2})drdK\ll\\
\int_T^{2T}K^{1/2}\exp(-2iK\arcosh{x/2})\int_{-\infty}^{\infty}\exp(-y^2-2iGy\arcosh{x/2})dydK\\\ll
\exp(-G^2\arcosh^2{x/2})\int_T^{2T}K^{1/2}\exp(-2iK\arcosh{x/2})dK\\\ll
\exp(-G^2\arcosh^2{x/2})\min\left(T^{3/2},\frac{T^{1/2}}{\arcosh{x/2}}\right).
\end{multline*}
Estimating another integral in \eqref{LG appl1} in the same way, we complete the proof.
\end{proof}
%%%%%%%%%%%%%%%%%%%%%%%%%%%%%%%%%%%%%%%%%%%%%%%%%%%%%%%%%%%%%%%%%%%%%%%%%%%%%%%%%%%%%%%%%%%%%%%%%%%%%%%%%%%%%%%%%%
\begin{cor}\label{cor I est z>2 rho=1/2}
The following estimate holds
\begin{equation}\label{sumIgeq2 estimate3}
\sum_{n=2l+1}^{\infty}\frac{1}{n^{1/2}}\mathscr{L}_{n^2-4l^2}(1/2)
\frac{1}{G\pi^{1/2}}\int_T^{2T}I\left(\frac{1}{2}; \frac{n}{l}\right)dK\ll \frac{T^{1/2+\epsilon}l^{1/2+2\theta}}{G^{1/2+2\theta}}.
\end{equation}
\end{cor}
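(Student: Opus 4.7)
The plan is to estimate each $I(1/2;n/l)$ by Lemma \ref{lemma2 I est z>2} applied with $x=n/l$, to bound $\mathscr{L}_{n^2-4l^2}(1/2)\ll ((n-2l)(n+2l))^{\theta}$ via the subconvexity estimate \eqref{eq:subconvexity}, and then to sum carefully in $n$.

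The Gaussian factor $\exp(-G^2\arcosh^2(n/(2l)))$ is the key localizer. Writing $n=2l+m$ with integer $m\geq 1$, the expansion $\arcosh(1+u)\asymp\sqrt{u}$ gives $\arcosh(n/(2l))\asymp\sqrt{m/l}$ for $1\leq m\leq l$, while for $n\geq 3l$ one has $\arcosh(n/(2l))\gg 1$ so that the exponential is $\exp(-cG^2)\ll T^{-A}$ (since $G\geq T^{\epsilon}$). Hence only $1\leq m\ll (l/G^2)\log T$ produces a non-negligible contribution, and in this range the approximation above is valid.

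In this effective range, $(x^2/(x^2-4))^{1/4}\asymp (l/m)^{1/4}$, the minimum is $\min(T^{3/2},T^{1/2}(l/m)^{1/2})$, and the correction $x/(T\sqrt{x^2-4})\asymp (l/m)^{1/2}/T$. A short case analysis in the two subranges $m\lessgtr l/T^2$ shows that the full factor from Lemma \ref{lemma2 I est z>2} is in any case dominated by $T^{1/2}(l/m)^{3/4}\exp(-cG^2 m/l)$. Combined with $n^{-1/2}\asymp l^{-1/2}$ and $\mathscr{L}_{n^2-4l^2}(1/2)\ll (4l)^{\theta}m^{\theta}$, the sum reduces to
\[
T^{1/2}\,l^{\theta+1/4}\sum_{m\geq 1}m^{\theta-3/4}\exp(-cG^2 m/l).
\]
Because $\theta-3/4=-7/12+\epsilon\in(-1,0)$, the sum is convergent and dominated by its upper cutoff $m\asymp l/G^2$, giving $\sum\ll(l/G^2)^{\theta+1/4}$. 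Therefore the total is $\ll T^{1/2+\epsilon}\,l^{1/2+2\theta}/G^{1/2+2\theta}$, matching the claim.

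The residual contribution of the error term $1/(x^{2}T^{1/2})$ from Lemma \ref{lemma2 I est z>2} is easily handled: summing $l^{2}n^{2\theta-5/2}/T^{1/2}$ over $n>2l$ yields $\ll l^{1/2+2\theta}/T^{1/2}$, which is dominated by $T^{1/2}l^{1/2+2\theta}/G^{1/2+2\theta}$ since $G\leq T^{1-\epsilon}$. The main obstacle is bookkeeping the asymptotics of $\arcosh(n/(2l))$ and $\sqrt{x^{2}-4}$ near the edge $n=2l+1$ and checking that the singular factor $(l/m)^{1/4}$ together with the correction $(1+x/(T\sqrt{x^{2}-4}))$ still combines cleanly against the Gaussian; this ultimately rests on the delicate fact that $\theta-3/4$ lies in the open interval $(-1,0)$, which is precisely what produces the exponent $1/2+2\theta$ in both $l$ and $G^{-1}$.
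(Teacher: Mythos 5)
Your proof follows the paper's argument essentially verbatim: bound $I(1/2;n/l)$ by \eqref{Igeq2 estimate2}, bound $\mathscr{L}_{n^2-4l^2}(1/2)$ by \eqref{eq:subconvexity}, use the Gaussian $\exp(-G^2\arcosh^2(n/(2l)))$ together with $\arcosh(1+u)\asymp\sqrt{u}$ to localize to $m=n-2l\ll lT^{\epsilon}/G^2$, and then sum $m^{\theta-3/4}$ over that range; the exponent bookkeeping ($\theta-3/4\in(-1,0)$, giving $(l/G^2)^{\theta+1/4}$ and hence $T^{1/2+\epsilon}l^{1/2+2\theta}G^{-1/2-2\theta}$) matches \eqref{sumIgeq2 estimate5}. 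The one point to tighten is the far tail: the termwise bound $\exp(-cG^2)\ll T^{-A}$ for $n\geq 3l$ does not by itself control the divergent series $\sum_n n^{2\theta-1/2}$, so you should either observe that $\arcosh(n/(2l))\gg\log(n/l)$ there (so the exponential decays superpolynomially in $n$), or, as the paper does, first discard the range $n/l\gg T^{\epsilon}$ using the bound \eqref{Igeq2 estimate0}.
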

\begin{proof}
Using \eqref{Igeq2 estimate0} and \eqref{Igeq2 estimate1},  we  show that the contribution of the sum over $n>2l+lT^{\epsilon}/G^2$ is negligible. To handle the remaining sum over $2l<n<2l+lT^{\epsilon}/G^2$, we apply \eqref{Igeq2 estimate2} and \eqref{eq:subconvexity}. As a result, we obtain that the left-hand side of \eqref{sumIgeq2 estimate3} is bounded by
\begin{multline}\label{sumIgeq2 estimate4}
\sum_{2l<n<2l+lT^{\epsilon}/G^2}\frac{(n^2-4l^2)^{\theta}}{n^{1/2}}
\Biggl(\left(\frac{n^2}{n^2-4l^2}\right)^{1/4}
\min\left(T^{3/2},\frac{T^{1/2}}{\arcosh{n/(2l)}}\right)\\\times
\left( 1+\frac{nl}{T\sqrt{n^2-4l^2}}\right)+\frac{l^2}{n^2T^{1/2}}\Biggr).
\end{multline}
We consider only the contribution of the summand with $1$. All other summands can be treated in the same way and are smaller in size.
This way we see that \eqref{sumIgeq2 estimate4} can be estimated as follows
\begin{multline}\label{sumIgeq2 estimate5}
T^{3/2}\sum_{2l<n<2l+l/T^2}(n^2-4l^2)^{\theta-1/4}+
\sum_{2l+l/T^2<n<2l+lT^{\epsilon}/G^2}(n^2-4l^2)^{\theta-1/4}\frac{T^{1/2}}{\arcosh{(n/(2l))}}\\\ll
\frac{l^{1/2+2\theta}}{T^{2\theta}}+
\sum_{2l+l/T^2<n<2l+lT^{\epsilon}/G^2}(n^2-4l^2)^{\theta-1/4}\frac{T^{1/2}l^{1/2}}{(n-2l)^{1/2}}\ll
\frac{T^{1/2+\epsilon}l^{1/2+2\theta}}{G^{1/2+2\theta}}.
\end{multline}
This completes the proof.
\end{proof}
%%%%%%%%%%%%%%%%%%%%%%%%%%%%%%%%%%%%%%%%%%%%%%%%%%%%%%%%%%%%%%%%%%%%%%%%%%%%%%
\section*{Acknowledgement}
The reported study was funded by RFBR, project number 19-31-60029.

%%%%%%%%%%%%%%%%%%%%%%%%%%%%%%%%%%%%%%%%%%%%%%%%%%%%%%%%%%%%%%%%%%%%%%%
\nocite{*}

\end{document}